\documentclass{amsart}

\usepackage{amsmath}
\usepackage{amssymb}
\usepackage{amsthm}
\usepackage{amsfonts}

\usepackage[a4paper]{geometry}
\usepackage{color}

\usepackage{hyperref}
\hypersetup{
    colorlinks=true,
    linkcolor=blue,
    citecolor=blue,
    }

\usepackage[utf8]{inputenc}
\newtheorem{theorem}{Theorem}
\newtheorem{proposition}{Proposition}
\newtheorem{lemma}{Lemma}
\newtheorem{corollary}{Corollary}
\theoremstyle{definition}
\newtheorem{remark}{Remark}
\newtheorem{example}{Example}

\DeclareMathOperator{\diag}{diag}

\author[O.Ostrovska]{Olha Ostrovska}
\address{National Technical University of Ukraine ``Igor Sikorsky Kyiv
Polytechnic Institute''}
\email{olyushka.ostrovska@gmail.com}

\author[V.Ostrovskyi]{Vasyl Ostrovskyi}
\address{Institute of Mathematics, NAS of Ukraine}
\email{vo@imath.kiev.ua}

\author[L.Turowska]{Lyudmila Turowska}
\address{Chalmers University of Technology, Department of Mathematics}
\email{turowska@chalmers.se}

\title{On quantum symmetries of graphs}
\date{}
\dedicatory{To memory of Yu.M.Berezansky}
\subjclass[2020]{46L67, 47L90, 81P45}
\keywords{Quantum graph, Quantum automorphism, Non-local game}

\begin{document}
\begin{abstract}
Let $G$ be a simple finite graph, and let $\mathcal U_G$ be the related quantum graph. We study the game algebra $C(\mathrm{Qut}(\mathcal U_G))$ of  quantum  automorphism of $\mathcal U_G$. We show that for the complete graph $K_n$, the algebra $C(\mathrm{Qut}(\mathcal U_{K_n}))$  is non-commutative already for all $n\geq 3$, in contrast to $C(\mathrm{Qut}(K_n))=C(S_n^+)$. 
Moreover, we prove that for any graph $G$ with $|V(G)|\geq 3$, the quantum graph $\mathcal U_G$ admits nonlocal symmetry, meaning that there exists a perfect quantum no-signaling correlation for the quantum automorphism game for $\mathcal U_G$ which is not local. 
\end{abstract}

\maketitle

\tableofcontents
\section{Introduction and preliminaries}
 
Let $G$ be a finite simple graph, meaning that it is undirected and has no multiple edges or loops. Write $V(G)$ for its set of vertices and $E(G)$ for the edge set, which is a set of unordered pairs of its vertices. For $x$, $y\in V(G)$ write $x\sim_G y$ or simply $x\sim y$, if $G$ is clear from the context, if there is an edge between $x$ and $y$ and let $A_G=(a_{x,y})_{x,y\in V(G)}$, be the adjacency matrix of $G$: we have $a_{x,y}=1$ if $x\sim_G y$, and $a_{x,y}=0$ otherwise. 
Let $G$ and $H$ be two simple graphs. We say that $G$ and $H$ are isomorphic if there exists a bijection $\varphi:V(G)\to V(H)$ such that $\varphi(x)\sim_H\varphi(y)$ if and only if $x\sim_G y$.  Clearly, in this case $|V(G)|=|V(H)|$. The map $\varphi$ is called an isomorphism of graphs. An automorphism $\varphi$ of a graph $G$ is an isomorphism from $G$ to itself. The automorphisms build a group with respect to composition known as the automorphism group of $G$.

Graph-based non-local games, and, in particular, isomorphism games have been important in the process of understanding entanglement in quantum mechanics, and first studied in \cite{amrssv}.   In a (2-players) non-local game, a trusted classical verifier interacts with two players, Alice and Bob, who collaborate in attempt to win the game by sending a question/input to each of the two players and they must respond with an answer/output. In the case of isomorphism games for simple graphs $G$ and $H$ with $|V(G)|=|V(H)|$ the verifier sends two vertices $x$ and $y$ of the graph $G$ and Alice and Bob must return vertices $a$ and $b$ of the graph $H$. In order to win, $\text{rel}(x,y)=\text{rel}(a, b)$ must hold where $\text{rel}$ is the relationship of two vertices that is the equality of the vertices or adjacency relation or non of the previous two.  A strategy for $(G,H)$-isomorphism is given by joint conditional probability distributions, called also correlations,  $p(a,b|x,y)$, where $x,y \in V(G)$  and $a,b\in V(H)$ and where $p(a,b|x,y)$ is thought as the probability for Alice and Bob give the answer $(a,b)$ upon receiving the questions $(x,y)$. We call the strategy is winning or perfect if $p(a,b|x,y)=0$, whenever $\text{rel}(x,y)\ne \text{rel}(a,b)$.  There are several subclasses of strategies that arise from different models of quantum mechanics, related to the way the players compose their individual physical systems. A deterministic classical strategy is given by 01-valued correlations: the Alice's answer is completely determined by the question she received and similar for Bob; in particular, the strategy is given by two functions $\varphi_A:V(G)\to V(H)$, $\varphi_B:V(G)\to V(H)$. For such strategy to be perfect it is easy to see that $\varphi_A=\varphi_B$, where the latter would be the actual isomorphism between $G$ and $H$. In general, classical (local)  strategies, one allows Alice and Bob to pick a deterministic strategy at random. The existence of a perfect local strategy gives the isomorphism of the graphs. A quantum commuting strategy is given by a Hilbert space $\mathcal H$, a state $\xi$, interpreted as a unit vector in $\mathcal H$, on which local measurements are
jointly performed and mutually commuting families of  measurements (POVMs) $E_x=(E_{x,a})_{a\in V(H)}$, $x\in V(G)$,  for Alice and $F_y=(F_{y,b})_{b\in V(H)}$, $y\in V(G)$, for Bob, that are used to measure $\xi$ with the resulting outcomes given by 
$$p(a,b|x,y)=\langle E_{x,a}F_{y,b}\xi,\xi\rangle. $$
The strategy is called quantum if $\mathcal H$ is a tensor product of finite-dimensional Hilbert spaces $\mathcal H_A\otimes \mathcal H_B$ and the product $E_{x,a}F_{y,b}$ is replaced by the tensor products of the POVMs defined now on the finite-dimensional spaces. Note that a quantum strategy can be obtained as a quantum commuting one by considering the POVMs $E_{x,a}\otimes 1_{\mathcal H_B}$ and $1_{\mathcal H_A}\otimes F_{y,b}$. Here a POVM on a Hilbert space $\mathcal H$ is a family of positive operators on $\mathcal H$ which sum up to the identity operator.  Local, quantum and quantum commuting strategies are examples of so called no-signalling correlations (NS correlations for short). The isomorphism game is a bisynchronous game, implying that the winning strategy satisfies $p(a,b|x,x)=p(a,a|x,y)=0$ for all $x\ne y$, $a\ne b$, and by \cite{paulsen-rahaman} the winning strategy of the quantum commuting (qc) type is given by a trace on the associated game algebra. Namely, for the isomorphism game $\text{Iso}(G,H)$,
the game algebra is the universal $C^*$-algebra defined by generators and relations as follows:  
\begin{align*}
\mathcal A(\text{Iso}(G,H))=C^*\Bigl\langle&
e_{x,a}, x\in V(G), a\in V(H): e_{x,a}^2=e_{x,a}=e_{x,a}^*,
\\ &\sum_x e_{x,a}=\sum_ae_{x,a}=1, \quad
(A_G\otimes 1)u=u(A_H\otimes 1)\Bigr\rangle,
\end{align*}
where $u=(e_{x,a})_{x\in V(G),a\in V(H)}$. Any winning qc-strategy is then given by 
$$p(a,b|x,y)=\tau(e_{x,a}e_{b,y}),$$
where $\tau$ is a tracial state on $\mathcal A(\text{Iso}(G,H))$. 
It is a local (quantum) strategy if $\tau$ is factorized through an abelian (finite-dimensional) $C^*$-algebra $\mathcal A$, that is $\tau=\tau_{\mathcal A}\circ\pi$, where $\pi: \mathcal A(\text{Iso}(G,H))\to\mathcal A$ is a $*$-homomorphism and $\tau_{\mathcal A}$ is a state (trace) on $\mathcal A$. 

In what follows, we will write 
$C(\mathrm{Qut}(G))$ for $\mathcal A(\text{Iso}(G,G))$, the quantum automorphism group; the notion is due to Banica, 2005
\cite{banica}.
One says that $G$ has quantum symmetry if $C(\mathrm{Qut}(G))$ is not commutative.

If $K_N$ is the complete graph on $N$ vertices, $C(\mathrm{Qut}(K_N))=C(S_N^+)$, called the free quantum permutation group, introduced by Wang \cite{wang} and studied intensively from various perspectives (see \cite{bbc, lmr, moritz, voigt} and references therein). It is the universal $C^*$-algebra generated by entries of $N\times N$ magic unitary, that is projections
$p_{i,j}$, $i,j=1,\ldots N$, such that $(p_{i,j})_{i,j=1}^N$, is unitary, equivalently, $\sum_i p_{i,j}=\sum_j p_{i,j}=1$. 
It is known that $C(S_N^+)$ is not commutative iff $N\geq 4$, see \cite{voigt}. 

While classically, in the course of a non-local game, 
 Alice and Bob's behaviour
is described by a family $p = (p(a,b|x,y))_{(a,b,x,y) \in A \times B \times X \times Y}$ of conditional 
probability distributions, which can, in a canonical way, be viewed as a noisy information channel 
$\mathcal N : X \times Y \to A \times B$ with well-defined marginal channels, 
in the quantum setting, one replaces the classical state spaces $X, Y, A, B$ by their quantum analogues (i.e. the Hilbert spaces $\mathbb C^{|X|}, \mathbb C^{|Y|},$ etc.), and the classical channel $\mathcal N : X \times Y \to A \times B$ 
by a {\it quantum channel} $\Gamma: M_X \otimes M_Y \to M_A \otimes M_B$, where, for any finite set $Z$, 
we have let $M_Z =  B(\mathbb C^{|Z|})$ be the matrix algebra of linear maps on $\mathbb C^{|Z|}$.  
Quantum no-signalling (QNS) correlations have been introduced by Duan and Winter in \cite{duan-winter}, and their different classes were introduced and studied in \cite{tt-QNS}.  An analogue of bisynchronous correlations, called QNS-bicorrelations,  was proposed in \cite{bhtt}, as no-signalling QNS correlations satisfying certain concurrency conditions; they are characterised in terms of tracial states on the $C^*$-algebra  $C({\mathbb P}{\mathcal  U}_X^+)$ 
of functions on {\it projective free unitary quantum group}.  
Recall that the $C^*$-algebra of the free unitary quantum group $C(\mathcal U_X^+)$ is the universal unital $C^*$-algebra generated by the entries $u_{x,a}$ of an $|X| \times |X|$ bi-unitary matrix $U = (u_{x,a})_{x,a}$ which was introduced by Wang, \cite{wang}, and studied by Banica in \cite{banica1}, 
and the $C^*$-algebra $C({\mathbb P}{\mathcal U}_{X}^+)$
is the $C^*$-subalgebra of
$C({\mathcal  U}_X^+)$,
generated by length two words of the form $u_{x,a}^*u_{x',a'}$, $x,x',a,a'\in X$, which is the quantum automorphism group $\text{Aut}^+(M_X)$ of Wang due to a result by Banica, \cite{banica2}. 

In \cite{bhtt} bisynchronous  bicorrelations were used to study quantum graph isomorphism games, 
where by a quantum graph we mean a symmetric skew subspace ${\mathcal U}\subset \mathbb C^X\times \mathbb C^X$ (see \cite[Definition 7.1]{bhtt}).  To a classical graph $G$ one can associate naturally the quantum graph  $\mathcal U_G=\text{span}\{e_x\otimes e_y: x\sim y\}$, where $\{e_x\}_{x\in X}$ is the natural orthogonal basis in $\mathbb C^X$, "embedding"  in this way classical graphs into the class of quantum graphs. Similar to the classical case one defines quantum isomorphisms between quantum graphs in terms of perfect 
QNS strategies for a suitable quantum graph isomorphism game. We refer the reader to \cite{bhtt} for the details.    
The game algebra whose tracial states encode such perfect strategies is a
quotient of $C({\mathbb P}{\mathcal  U}_X^+)$. For the classical graphs $G$,
$H$, $\mathcal A(\text{Iso}(\mathcal
U_G,\mathcal U_H))$ 
is the  $C^*$-subalgebra generated by $u_{a,x}^*u_{b,y}$, $a,b,x,y\in X=V(G)=V(H)$ of the universal $C^*$-algebra with generators $u_{a,x}$, $a,x\in X$, and relations such that $U = (u_{x,a})_{x,a\in X}$ is a bi-unitary and 
 $u_{a,x}u_{b,y}^*=0$ whenever $x\sim y$
and $a\not\sim b$ or $x\not\sim y$ and $a\sim b$.  Note, that by some abuse of
terminology, in the paper we will always refer to this algebra as the universal
$C^*$-algebra generated by the words  $u_{a,x}^*u_{b,y}$, $a,b,x,y\in X$, where
$U = (u_{x,a})_{x,a\in X}$ is a bi-unitary, whose entries satisfy the given
additional relations.
It was proved in \cite{bhtt} that there exists a local perfect QNS strategy for
the quantum $(\mathcal U_G,\mathcal U_H)$-isomorphism game if and only if $G$
and $H$ are isomorphic, and hence there is a local strategy for
$(G,H)$-isomorphism game.
However, it is unknown whether genuinely quantum (quantum/quantum commuting
QNS) strategies will have an advantage over quantum/quantum commuting NS
correlations  to play the $(G,H)$-isomorphism game\footnote{After the paper was
completed, D. Roberson informed the third author, in private communication, that
there exist classical graphs $G$ and $H$  such that $\mathcal U_G$ and $\mathcal
U_H$ are quantum isomorphic yet there is no winning quantum no-signalling
correlation for the isomorphism game $\text{Iso}(G,H)$.}.  Treating classical
graphs as quantum, one may ask whether they have a more rich quantum symmetries
and this is the goal of our paper. Namely, in this work we study the algebra
$C(\mathrm{Qut}(\mathcal U_G)):=\mathcal A(\text{Iso}(\mathcal
U_G,\mathcal U_G))$ of the quantum  automorphism game of $\mathcal U_G$. We show that for the complete graph $K_n$, the algebra $C(\mathrm{Qut}(\mathcal U_{K_n}))$  is non-commutative already for all $n\geq 3$, in contrast to $C(\mathrm{Qut}(K_n))=C(S_n^+)$.   Moreover, there is a surjective homomorphism from  $C(\mathrm{Qut}(\mathcal U_{K_n}))$ to $C^*({\mathbb F}_{n-1})$, the $C^*$-algebra of the free group on $n-1$ generators. In section \ref{nonloc} we prove that for any graph $G$ with $|V(G)|\geq 3$, the corresponding quantum graph $\mathcal U_G$ admits nonlocal symmetry,
meaning that there exists a perfect QNS correlation for the quantum automorphism game for $\mathcal U_G$ which is not local. The similar question for classical graphs was studied by Roberson and Schmidt in \cite{RobSch21}, where they proved among other results, that the complete graph $K_n$ exhibit nonlocal symmetry if and only if $n\geq 5$.   In section \ref{dec}  we also present a partition procedure that allows to refine the description of  quantum isomorphisms of quantum graphs of type $\mathcal U_G$.  

For a Hilbert space $H$ we write $B(H)$ for the space of all bounded operators. As usual, $M_n(\mathbb C)$ or simply $M_n$ is the space of $n\times n$ complex matrices. If $A$ is a $C^*$-algebra $M_n(A)$ is the $C^*$-algebra of matrices with entries in $A$. We write $\mathbb N_n$ for the set $\{1,\ldots, n\}$. 

\subsection*{Acknowledgements}
O.O. and V.O. thank Chalmers University of Technology which supported their visit to Gothenburg in 2023, 2024. L.T. thanks GS Magnusons Fond MF2022-0006, which supported the visit of O.O. and V.O. to Gothenburg in 2023, 2024. V.O. thanks Simons Foundation for financial support (grant SFI-PD-Ukraine-00014586, VO).
This material was also based upon work supported by the Swedish Research Council under grant no.2021-06594 while L.T. was in residence at Institut Mittag-Leffler in Djursholm during the semester "Operator algebras and Quantum information theory" Spring, 2026.

\section{Game $C^*$-algebra for complete graphs}

\subsection{Structure of the game algebra $C(\mathrm{Qut}(\mathcal U_{K_n}))$}
Recall that if $\mathcal A$ is a $C^*$-algebra, a block matrix $U=(u_{i,j})_{i,j=1}^n\in M_n(\mathcal A)$ is called a bi-unitary if $U$ and $U^t=(u_{j,i})_{i,j=1}^n$ are unitary.

Let $K_n$ be the complete graph on $n$ vertices, $n\geq 1$.  By definition, the algebra $C(\mathrm{Qut}(\mathcal U_{K_n}))$
is the 
universal $C^*$-algebra generated by $u_{i,j}^*u_{k,l}$, $i,j,k,l=1,\ldots, n$,
where $U=(u_{i,j})_{i,j=1}^n$ is a bi-unitary, such that $u_{i,k}u_{i,l}^*=0$
for $k\ne l$ and $u_{i,k}u_{j,k}^*=0$, when $i\ne j$.
The next theorem gives an embedding of $C(\mathrm{Qut}(K_n))$ into $C(\mathrm{Qut}(\mathcal U_{K_n}))$. 

\begin{theorem}\label{prop:aiso} For each $n\in\mathbb N$,
$C(\mathrm{Qut}(\mathcal U_{K_n}))$ is the universal
$C^*$-algebra generated by $u_{i,j}^*u_{k,l}$, $i,j,k,l\in \mathbb N_n$ where each
$u_{i,j}$ is a partial isometry and  $U=(u_{i,j})_{i,j=1}^n$ is a  bi-unitary. The
map $\varphi:C(S_n^+)\to C(\mathrm{Qut}(\mathcal U_{K_n}))$, $p_{i,k}\to u_{i,k}^*u_{i,k}$, $i,k\in \mathbb N_n$, is an injective $*$-homomorphism.
\end{theorem}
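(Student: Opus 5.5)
The statement has two parts. First, the defining relations of $C(\mathrm{Qut}(\mathcal U_{K_n}))$ are equivalent, for a bi-unitary $U$, to each $u_{i,j}$ being a partial isometry. Second, $\varphi$ is a well-defined injective $*$-homomorphism.

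\emph{Equivalence of relations.} Assume $U$ is a bi-unitary with $u_{i,k}u_{i,l}^*=0$ for $k\ne l$ and $u_{i,k}u_{j,k}^*=0$ for $i\ne j$. Unitarity of $U$ gives $\sum_l u_{i,l}u_{i,l}^*=1$. Multiplying on the right by $u_{i,k}$ and using $u_{i,l}^*u_{i,k}$... this is the wrong product, so I would instead multiply on the left by $u_{i,k}u_{i,k}^*$. The relation $u_{i,k}u_{i,l}^*=0$ kills the terms with $l\ne k$, giving $u_{i,k}u_{i,k}^*u_{i,k}u_{i,k}^*=u_{i,k}u_{i,k}^*$. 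Hence $u_{i,k}u_{i,k}^*$ is a projection and $u_{i,k}$ is a partial isometry.

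Conversely, suppose each $u_{i,j}$ is a partial isometry. Set $q_{i,l}=u_{i,l}u_{i,l}^*$. These are projections with $\sum_l q_{i,l}=1$, coming from the $(i,i)$ entry of $UU^*=1$. Projections summing to $1$ are mutually orthogonal, so $q_{i,k}q_{i,l}=0$ for $k\ne l$. Then
\[u_{i,k}u_{i,l}^*=q_{i,k}u_{i,k}u_{i,l}^*q_{i,l}.\]
This looks circular, so I would argue directly: $\|u_{i,k}^*u_{i,l}\|^2$-type norms aren't needed, since $u_{i,l}^*=u_{i,l}^*q_{i,l}$ and $u_{i,k}=q_{i,k}u_{i,k}$. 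Hence
\[u_{i,l}^*{}^*\text{-}\ \ u_{i,k}u_{i,l}^*=q_{i,k}u_{i,k}u_{i,l}^*q_{i,l}.\]
Pulling out $q_{i,k}$ on the left and $q_{i,l}$ on the right gives nothing new. Instead I would use the projections $u_{i,l}u_{i,l}^*$ row-wise through $(UU^*)_{ij}=\sum_l u_{i,l}u_{j,l}^*=\delta_{ij}$. The needed orthogonality is $u_{i,k}u_{i,l}^*=0$, which is column-type data $(U^t\cdots)$. The cleanest route is to use unitarity of $\bar U$, i.e.\ $\sum_k u_{k,j}^*u_{k,j}=1$ for each $j$, and $\sum_j u_{i,j}^* u_{i,j}$... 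The key point is which of $UU^*=1$, $U^*U=1$, $U^tU^{t*}=1$, $U^{t*}U^t=1$ produces the sums $\sum_l u_{i,l}u_{i,l}^*=1$ and $\sum_i u_{i,k}^*u_{i,k}=1$, together with their transposed analogues. One of these row/column sums of source or range projections equals $1$, which forces orthogonality of the range projections $u_{i,k}u_{i,k}^*$ across $k$. Orthogonality of ranges then gives $u_{i,l}^*u_{i,k}=u_{i,l}^*q_{i,l}q_{i,k}u_{i,k}=0$. I must match the stated relation $u_{i,k}u_{i,l}^*=0$, which requires orthogonality of the source projections $u_{i,k}^*u_{i,k}$ over $k$. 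That follows from $\sum_k u_{i,k}^*u_{i,k}=1$, the $(i,i)$ entry of $\bar U^{*}\bar U$-type identity, i.e.\ unitarity of $U^t$ conjugated. The column relation is handled symmetrically.

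\emph{The homomorphism $\varphi$.} Put $p_{i,k}=u_{i,k}^*u_{i,k}$. These are projections. The identities $\sum_k p_{i,k}=1$ and $\sum_i p_{i,k}=1$ come from the diagonal entries of $\bar U^*$-type unitarity relations. So $(p_{i,k})$ is a magic unitary, and universality of $C(S_n^+)$ gives $\varphi$.

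\emph{Injectivity.} This is the main obstacle. My plan is to construct a left inverse at the level of representations. Given any representation of $C(S_n^+)$ on $H$ by a magic unitary $(P_{i,k})$, I would take $u_{i,k}=P_{i,k}$ itself. Each $u_{i,k}$ is then a projection, hence a partial isometry. $U$ is a bi-unitary, since a magic unitary is unitary and its transpose is also a magic unitary. This gives a representation $\pi$ of $C(\mathrm{Qut}(\mathcal U_{K_n}))$ with $\pi(\varphi(p_{i,k}))=P_{i,k}^*P_{i,k}=P_{i,k}$. Taking the universal representation of $C(S_n^+)$ yields $\pi\circ\varphi=\mathrm{id}$, so $\varphi$ is injective. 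The care needed is that $C(\mathrm{Qut}(\mathcal U_{K_n}))$ is defined as a subalgebra generated by the words $u^*u$. Since $\pi$ is defined on the ambient universal algebra, its restriction to that subalgebra is still a $*$-homomorphism, so the argument goes through.
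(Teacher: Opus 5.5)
Your overall plan matches the paper's: show that the defining relations are equivalent to each $u_{i,j}$ being a partial isometry, observe that the source projections $u_{i,k}^*u_{i,k}$ form a magic unitary, and get injectivity from the left inverse induced by $u_{i,j}\mapsto p_{i,j}$. However, the first implication is not valid as you wrote it. Multiplying $\sum_l u_{i,l}u_{i,l}^*=1$ on the left by $u_{i,k}u_{i,k}^*$ produces cross terms $u_{i,k}u_{i,k}^*u_{i,l}u_{i,l}^*$. Their middle factor is $u_{i,k}^*u_{i,l}$, not $u_{i,k}u_{i,l}^*$. The relation $u_{i,k}u_{i,l}^*=0$ says that the \emph{initial} projections of $u_{i,k}$ and $u_{i,l}$ are orthogonal; it does not directly kill terms involving the range projections. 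Range orthogonality $u_{i,k}^*u_{i,l}=0$ does hold, but proving it needs a separate argument that you did not give, e.g.\ multiplying the off-diagonal identity $\sum_m u_{m,k}^*u_{m,l}=0$ by $u_{i,k}$ and using the column relations. The repair is to use the other row identity, $\sum_l u_{i,l}^*u_{i,l}=1$, which is the $(i,i)$ entry of $(U^t)^*U^t=1$. Multiplying it on the left by $u_{i,k}$ gives
\[
u_{i,k}=\sum_l u_{i,k}u_{i,l}^*u_{i,l}=u_{i,k}u_{i,k}^*u_{i,k}.
\]
This is the paper's computation, written there for the adjoint. The fix matters for the rest of your argument: knowing that $p_{i,k}=u_{i,k}^*u_{i,k}$ are projections inside $C(\mathrm{Qut}(\mathcal U_{K_n}))$, and hence that $\varphi$ is well defined, depends on this implication.

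The remaining parts are fine and follow the paper. In the converse, drop the detour through $q_{i,l}=u_{i,l}u_{i,l}^*$, since range orthogonality is not what is needed. State directly that the projections $u_{i,k}^*u_{i,k}$ sum to $1$ along each row and each column, hence are mutually orthogonal there. Then $u_{i,k}u_{i,l}^*=u_{i,k}(u_{i,k}^*u_{i,k})(u_{i,l}^*u_{i,l})u_{i,l}^*=0$ for $k\ne l$, and likewise $u_{i,k}u_{j,k}^*=0$ for $i\ne j$. Your injectivity argument, which builds a representation of the ambient partial-isometry algebra from a faithful representation of $C(S_n^+)$ and restricts it, is the paper's map $\rho\colon\mathcal U^+_{n,p.i.}\to C(S_n^+)$, $u_{i,j}\mapsto p_{i,j}$, with $\rho\circ\varphi=\mathrm{id}$, phrased spatially.
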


\begin{proof}
If $u_{k,l}^*u_{i,j}$, $i,j,k,,l\in\mathbb N_n$, are the generators of $C(\mathrm{Qut}(\mathcal
U_{K_n}))$, then $u_{i,j}u_{i,k}^*=0$ if $j\ne k$. Therefore,
for all $i$, $k$,
\[
u_{i,k}^*=\sum_ju_{i,j}^*u_{i,j}u_{i,k}^*=\sum_{j\ne
k}u_{i,j}^*u_{i,j}u_{i,k}^*+u_{i,k}^*u_{i,k}u_{i,k}^*=u_{i,k}^*u_{i,k}u_{i,k}^*,
\]
i.e. each $u_{i,k}$ is a partial isometry. Conversely, if $U=(u_{i,j})_{i,j=1}^n$ is
a bi-unitary such that each $u_{i,j}$ is a partial isometry we have
$u_{i,k}u_{j,k}^*=0$ and $u_{k,i}u_{k,j}^*=0$ if $i\ne j$. In fact, in this case
each $u_{i,k}^*u_{i,k}$ is a projection and $\sum_i u_{i,k}^*u_{i,k}=1$ showing
that the projections are orthogonal and
$$u_{i,k}u_{j,k}^*=u_{i,k}u_{i,k}^*u_{i,k}u_{j,k}^*u_{j,k}u_{j,k}^*=0, \quad i\ne j.$$

Similarly, $u_{k,i}u_{k,j}^*=0$ if $i\ne j$. Therefore we obtain the first
statement. Remark also that
 as $\sum_ku_{i,k}^*u_{j,k}=0$ for $i\ne j$,
\[
u_{i,m}u_{i,m}^*u_{j,m}=u_{i,m}\sum_k u_{i,k}^*u_{j,k}=0,
\]
showing that also $u_{i,m}^*u_{j,m}=0$ if $i\ne j$, and similarly,
$u_{m,i}^*u_{m,j}=0$.

Clearly, the map $\varphi$ is a $*$-homomorphism. It is injective. In fact,
letting $\mathcal U_{n,p.i.}^+$ be the universal $C^*$-algebra generated by
partial isometries $u_{i,j}$ such that $U=(u_{i,j})_{i,j=1}^n$ is a bi-unitary and
considering $\rho:\mathcal U_{n,p.i.}^+\to C(S_n^+)$, $u_{i,j}\mapsto p_{i,j}$,
we obtain $\rho(\varphi(a))=a$, $a\in C(S_n^+)$.
\end{proof}

\subsection{Properties of $C(\mathrm{Qut}(\mathcal U_{K_2}))$ and $C(\mathrm{Qut}(\mathcal U_{K_3}))$}
The $C^*$-algebra $C(S_2^+)$ is generated by projections $p_{1,1}$, $p_{1,2}$, $p_{2,1}$,
$p_{2,2}$ such that $p_{1,1}+p_{1,2}=1$, $p_{2,1}+p_{2,2}=1$,
$p_{1,1}+p_{2,1}=1$, $p_{1,2}+p_{2,2}=1$ showing that $p_{1,2}=p_{2,1}$,
$p_{1,1}=p_{2,2}$ and hence $C(S_2^+)$ is the universal $C^*$-algebra generated by one
projection.
\begin{proposition}\label{graph_K2}
$C(\mathrm{Qut}(\mathcal U_{K_2}))$ is commutative and
$C(\mathrm{Qut}(\mathcal U_{K_2}))\simeq C(\mathbb
T)\oplus C(\mathbb T)$.
\end{proposition}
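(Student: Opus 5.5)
By Theorem~\ref{prop:aiso}, $C(\mathrm{Qut}(\mathcal U_{K_2}))$ is generated by the products $u_{i,j}^*u_{k,l}$, where $U=(u_{i,j})_{i,j=1}^2$ is a bi-unitary whose entries are partial isometries. The plan is to describe this universal object concretely and then identify the subalgebra generated by these words.

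\textbf{Reduction to a projection and two unitaries.} Put $p=u_{1,1}^*u_{1,1}$, the image of the generator of $C(S_2^+)$ under $\varphi$. By the discussion before the proposition, the magic unitary $(u_{i,j}^*u_{i,j})$ is
\[
\begin{pmatrix} p & 1-p\\ 1-p & p\end{pmatrix}.
\]
Applying the same argument to $U^t$ (also a bi-unitary with partial isometry entries), the matrix $(u_{i,j}u_{i,j}^*)$ is $\begin{pmatrix} q & 1-q\\ 1-q & q\end{pmatrix}$ for some projection $q$.

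\textbf{The two unitaries.} Now set
\[
w_1=u_{1,1}+u_{2,2},\qquad w_2=u_{1,2}+u_{2,1}.
\]
Source projections of $u_{1,1},u_{2,2}$ are both $p$, so in $w_1$ one must check the cross terms. The relations $u_{i,k}u_{j,k}^*=0$ and $u_{k,i}^*u_{k,j}=0$ from Theorem~\ref{prop:aiso} kill some of them. Those relations do not directly kill terms such as $u_{1,1}^*u_{2,2}$, so unitarity should rather come from the bi-unitarity identities $\sum_k u_{k,i}^*u_{k,j}=\delta_{ij}$. This mixing of indices is the subtle point. A cleaner plan is to work with generators: $a=u_{1,1}^*u_{1,1}$, and the words $u_{1,1}^*u_{2,2}$, $u_{1,2}^*u_{2,1}$, etc.

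\textbf{Guessing the answer from representations.} In an irreducible representation, use the orthogonality relations $u_{1,1}u_{2,1}^*=0$ and $u_{1,1}^*u_{1,2}=0$ to decompose $H=pH\oplus(1-p)H$ with $p$ central-ish. Then $u_{1,1}:pH\to qH$ and $u_{2,2}:pH\to(1-q)H$. The words $u^*u$ map $pH\to pH$ or $(1-p)H\to(1-p)H$; mixed words such as $u_{1,1}^*u_{1,2}$ vanish. So the generated algebra lives in $pB(H)p\oplus(1-p)B(H)(1-p)$. In the corner $pH$, it is generated by $p$ together with $v=u_{1,1}^*u_{2,2}$; from $u_{1,1}^*u_{1,1}+u_{2,1}^*u_{2,1}=1$ and the analogous identities, $v$ is a unitary on $pH$. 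Similarly $v'=u_{1,2}^*u_{2,1}$ is a unitary on $(1-p)H$. The remaining words, e.g. $u_{2,2}^*u_{1,1}=v^*$, are expressed via $v,v',p$.

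\textbf{Identification and the main obstacle.} The map
\[
C(\mathrm{Qut}(\mathcal U_{K_2}))\to C(\mathbb T)\oplus C(\mathbb T),\qquad v\mapsto (z,0),\quad v'\mapsto (0,z),\quad p\mapsto(1,0),
\]
is then surjective. The main obstacle is proving that the universal object is exactly this, i.e. that $v$ and $v'$ have full spectrum $\mathbb T$ and no extra relations arise. For this, I would build explicit models: take $H=L^2(\mathbb T)\oplus L^2(\mathbb T)$ with $u_{1,1}=\begin{pmatrix}z&0\\0&0\end{pmatrix}$ realized via the appropriate swap of summands, and check that the bi-unitary relations hold. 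Injectivity then follows because $v$ and $v'$ act as the universal unitaries in their corners, and commutativity is immediate from the direct-sum decomposition.
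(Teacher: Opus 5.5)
Your outline is sound and is essentially the paper's argument, organized more structurally. The paper lists the only nonzero words ($u_{1,1}^*u_{1,1}=u_{2,2}^*u_{2,2}$, $u_{1,1}^*u_{2,2}$, $u_{1,2}^*u_{2,1}$, their adjoints, and $u_{1,2}^*u_{1,2}=1-u_{1,1}^*u_{1,1}$) and checks that they commute. It then gets the surjection from the explicit commuting bi-unitary $u_{1,1}\mapsto(1,0)$, $u_{2,2}\mapsto(z,0)$, $u_{1,2}\mapsto(0,1)$, $u_{2,1}\mapsto(0,z)$, and injectivity from the fact that every character is one of $\pi_z,\rho_z$, $z\in\mathbb T$.

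Your version runs as follows. Write $\mathcal B=C(\mathrm{Qut}(\mathcal U_{K_2}))$. The projection $p$ is central in $\mathcal B$ itself, since the four mixed words and their adjoints vanish; no irreducibility is needed. Hence $p\mathcal B$ and $(1-p)\mathcal B$ are each generated by one unitary, which is a cleaner route to commutativity. Your ``universal unitaries'' remark becomes rigorous like this: functional calculus in the two corners gives a surjection $\psi\colon C(\mathbb T)\oplus C(\mathbb T)\to\mathcal B$, $(f,g)\mapsto f(v)+g(v')$. Your map $\varphi$ satisfies $\varphi\circ\psi=\mathrm{id}$ on the generators $(1,0),(z,0),(0,z)$, so $\psi$ is an isomorphism; this replaces the paper's character count. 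No ``swap of summands'' is needed in the model: the paper's bi-unitary above (in which $q=p$) works. Write it down explicitly, since its existence is what shows that $v$ and $v'$ have full spectrum.

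There is, however, a false step you must correct. You claim $u_{2,2}$ maps $pH$ into $(1-q)H$. If so, the ranges of $u_{1,1}$ and $u_{2,2}$ would be orthogonal and $v=u_{1,1}^*u_{2,2}=0$, contradicting your next claim that $v$ is unitary on $pH$. The correct facts are $u_{1,1}u_{1,1}^*=u_{2,2}u_{2,2}^*=q$ and $u_{1,2}u_{1,2}^*=u_{2,1}u_{2,1}^*=1-q$. These are exactly what give $v^*v=u_{2,2}^*qu_{2,2}=p$, $vv^*=u_{1,1}^*qu_{1,1}=p$, and likewise $(v')^*v'=v'(v')^*=1-p$.

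Relatedly, this range-projection magic unitary does not follow by applying the source-projection argument to $U^t$; that only reproduces the $u_{j,i}^*u_{j,i}$. It comes from $UU^*=1$ and $U^t(U^t)^*=1$, i.e.\ $\sum_j u_{i,j}u_{i,j}^*=\sum_i u_{i,j}u_{i,j}^*=1$. For the same reason, the identity you cite for the unitarity of $v$, $u_{1,1}^*u_{1,1}+u_{2,1}^*u_{2,1}=1$, is a source identity and does not suffice. What works is, for example, $vv^*=u_{1,1}^*(1-u_{2,1}u_{2,1}^*)u_{1,1}=p$, using $u_{2,1}u_{2,1}^*+u_{2,2}u_{2,2}^*=1$ and $u_{1,1}^*u_{2,1}=0$. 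With these fixes the argument is complete.
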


\begin{proof}
Let $u_{i,j}$, $i,j=1,2$, be partial isometries  such that $u_{i,j}^*u_{k,l}$, $i,j,k,l=1,2$,
generate $C(\mathrm{Qut}(\mathcal U_{K_2}))$.  The only
possibly non-zero products are $u_{1,1}^*u_{1,1}=u_{2,2}^*u_{2,2}$,
$u_{1,1}^*u_{2,2}$, $u_{2,2}^*u_{1,1}$, $u_{1,2}^*u_{2,1}$, $u_{2,1}^*u_{1,2}$,
$u_{1,2}^*u_{1,2}=1-u_{1,1}^*u_{1,1}=u_{2,1}^*u_{2,1}$. It is a routine to check
that they commute.

Let $\varphi:C( \mathcal U_2^+)\to C(\mathbb T)\oplus C(\mathbb T)$,
\[
\varphi(u_{1,1})=(1,0),\quad \varphi(u_{2,2})=(z,0),\quad \varphi(u_{1,2})=(0,1),\quad
\varphi(u_{2,1})=(0,z).
\]
It is easy to see that $\varphi$ determines a surjective $*$-homomorphism of 
of $C(\mathrm{Qut}(\mathcal U_{K_2}))$ onto $C(\mathbb T)\oplus
C(\mathbb T)$.
As any one-dimensional  representation of  $C(\mathrm{Qut}(\mathcal
U_{K_2}))$ is given by
$\pi_z(u_{1,1}^*u_{1,1})=1$, $\pi_z(u_{1,1}^*u_{2,2})=z$, and
$\pi_z(u_{1,2}^*u_{1,2})=\pi_z(u_{2,1}^*u_{2,1})=0$ or
$\rho_z(u_{1,2}^*u_{1,2})=1$, $\rho_z(u_{1,2}^*u_{2,1})=z$, and
$\rho_z(u_{1,1}^*u_{1,1})=\rho_z(u_{2,2}^*u_{2,2})=0$, $z\in\mathbb T$,  we
obtain that the $*$-homomorphism is a $*$-isomorphism.
\end{proof}

As $C(S_n^+)$ is non-commutative for $n\geq 4$ we have already that $C(\mathrm{Qut}(\mathcal U_{K_n}))$ is non-commutative for the same values of $n$, as
$C(S_n^+)$ sits in the latter algebra.

We will see that $C(\mathrm{Qut}(\mathcal U_{K_3}))$ is not commutative. But first we will establish some structural properties of the algebra $C(\mathrm{Qut}(\mathcal U_{K_n}))$. 

\begin{lemma}
 Let $U = (u_{a,x})_{a,x=1}^n$ be a bi-unitary matrix which determines a
representation of $C(\mathrm{Qut}(\mathcal U_{K_n}))$. There
exists a block-diagonal unitary matrix $V = \diag(v_1,\dots, v_n)$ such that
$VU$ is a projection permutation matrix, i.e., $(v_a u_{a,x})_{a,x=1}^n$ is a
family of projections which forms a representation of $C(S_n^+)$.
\end{lemma}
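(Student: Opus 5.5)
Our plan is to build each $v_a$ from the partial isometries in the $a$-th row of $U$. By Theorem~\ref{prop:aiso}, in any representation of $C(\mathrm{Qut}(\mathcal U_{K_n}))$ on a Hilbert space $\mathcal H$, each $u_{a,x}$ is a partial isometry. The proof of that theorem also gives the row and column orthogonality relations
$$u_{a,x}u_{a,y}^*=0,\qquad u_{a,x}^*u_{b,x}=0 \qquad (x\ne y,\ a\ne b).$$
Put $q_{a,x}=u_{a,x}^*u_{a,x}$ (initial projections) and $r_{a,x}=u_{a,x}u_{a,x}^*$ (final projections). Unitarity of $U$ gives $\sum_x q_{a,x}=1$. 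Indeed, the diagonal entry $(a,a)$ of $U^*U$ is not directly this sum, so we use $U^t$ unitary, i.e.\ $\sum_x u_{a,x}^*u_{a,x}=1$ (equivalently, $\overline U$ unitary). Similarly, $U U^*=1$ gives $\sum_x r_{a,x}=1$ for each fixed $a$. So for each row $a$, both families $(q_{a,x})_x$ and $(r_{a,x})_x$ are partitions of unity by mutually orthogonal projections.

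Define
$$v_a=\sum_x u_{a,x}^*.$$
We first check that $v_a$ is unitary. Compute
$$v_a^*v_a=\sum_{x,y}u_{a,x}u_{a,y}^*=\sum_x u_{a,x}u_{a,x}^*=\sum_x r_{a,x}=1,$$
where the cross terms vanish by the row relation $u_{a,x}u_{a,y}^*=0$ for $x\ne y$. Next,
$$v_av_a^*=\sum_{x,y}u_{a,x}^*u_{a,y}=\sum_x q_{a,x}=1,$$
using that the $r_{a,x}$ are orthogonal. For $x\ne y$ we have $u_{a,x}^*u_{a,y}=u_{a,x}^*r_{a,x}r_{a,y}u_{a,y}=0$. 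So $v_a$ is unitary.

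Next we check that $v_au_{a,x}$ is a projection. We get
$$v_au_{a,x}=\sum_y u_{a,y}^*u_{a,x}=u_{a,x}^*u_{a,x}=q_{a,x},$$
again by orthogonality of the final projections. So the entries of $VU$ are exactly the projections $q_{a,x}$. Row sums are $1$, as shown above. Column sums $\sum_a q_{a,x}=1$ come from the diagonal of $U^*U=1$, which is $\sum_a u_{a,x}^*u_{a,x}=1$. Hence $(q_{a,x})$ is a magic unitary, which is also the representation $\varphi$ of $C(S_n^+)$ from Theorem~\ref{prop:aiso}.

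The only delicate point is bookkeeping: deciding which of the unitarity conditions ($U$, $U^t$, equivalently $U^*$, $\overline U$) yields which sum, and which vanishing relation kills the cross terms. The orthogonality of final projections within a row is what makes both the unitarity of $v_a$ and the identity $v_au_{a,x}=q_{a,x}$ work. We would verify that these sums follow from bi-unitarity exactly as in the proof of Theorem~\ref{prop:aiso}.
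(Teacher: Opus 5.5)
Your proposal is correct and follows the paper's proof. You make the same choice $v_a=\sum_x u_{a,x}^*$, and you get unitarity of $v_a$ and the identity $v_au_{a,x}=u_{a,x}^*u_{a,x}$ in the same way, from the vanishing of the cross terms $u_{a,x}u_{a,y}^*$ and $u_{a,x}^*u_{a,y}$ for $x\ne y$. Your additions are explicit bookkeeping of which unitarity condition gives which sum (the row sum $\sum_x u_{a,x}^*u_{a,x}=1$ comes from $U^t$, not $U$, as you correct yourself) and a direct derivation of $u_{a,x}^*u_{a,y}=0$ from orthogonality of the final projections, where the paper cites the proof of Theorem~\ref{prop:aiso}.
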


\begin{proof}
 Put
 \[
  v_a = \sum_{x=1}^n u^*_{a,x}, \quad a =1,\dots, n.
 \]
 We claim that $v_a$, $a\in\mathbb N_n$, form the desired family. Indeed, as shown in the proof of
Theorem~\ref{prop:aiso}, $u_{a,x}u^*_{a,y} = u^*_{a,x}u_{a,y} =0$, $x \not\sim
y$, then we have
\begin{align*}
 v_a^*v_a& = \sum_{x,y=1}^n u_{a,x}u^*_{a,y} = \sum_{x=1}^n u_{a,x}u^*_{a,x} =I,
 \\
 v_av_a^*& = \sum_{x,y=1}^n u^*_{a,x}u_{a,y} = \sum_{x=1}^n u^*_{a,x}u_{a,x} =I,
\end{align*}
therefore, $v_a$ is unitary. Further, we have
\[
 v_a u_{a,x} = \sum_{y=1}^n u^*_{a,y} u_{a,x} = u^*_{a,x} u_{a,x} =: p_{a,x},
\]
which, as shown above, is a projection, and moreover, these projections form a
representation of $C(S_n^+)$.
\end{proof}

\begin{theorem}\label{generator}
  The algebra $C(\mathrm{Qut}(\mathcal U_{K_n}))$ is
isomorphic to the universal $C^*$-algebra  $\mathcal  A$ generated by
projections $p_{a,x}$, $a, x\in \mathbb N_n$, and unitaries $u_i$, $i\in\mathbb N_{n-1}$, such that $\sum_{a=1}^n p_{a,x}=\sum_{x=1}^n p_{a,x}=1$ and $p_{b,x}u_b\ldots
u_{a-1}p_{a,x}=0$ for $b<a$.
  \end{theorem}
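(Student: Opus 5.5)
The plan is to build mutually inverse $*$-homomorphisms between $C(\mathrm{Qut}(\mathcal U_{K_n}))$ and $\mathcal A$, defining each on generators and using the universal properties. Throughout I use the description of $C(\mathrm{Qut}(\mathcal U_{K_n}))$ from Theorem~\ref{prop:aiso}: it is generated by the words $u_{i,j}^*u_{k,l}$, where the $u_{i,j}$ are partial isometries forming a bi-unitary. I also use the unitaries $v_a=\sum_x u_{a,x}^*$ from the preceding Lemma.

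\emph{From $\mathcal A$ to $C(\mathrm{Qut}(\mathcal U_{K_n}))$.} Put $p_{a,x}=u_{a,x}^*u_{a,x}$ and $u_a=v_av_{a+1}^*=\sum_{x,y}u_{a,x}^*u_{a+1,y}$, which lies in the algebra generated by the words. These $u_a$ are unitary, and the $p_{a,x}$ form a magic unitary by Theorem~\ref{prop:aiso}. The product telescopes: $u_b\cdots u_{a-1}=v_bv_a^*$. Using $u_{b,x}u_{b,y}^*=0$ for $y\ne x$ and the partial isometry property, $p_{b,x}v_b=u_{b,x}^*$, and similarly $v_a^*p_{a,x}=u_{a,x}$. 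Hence
\[
p_{b,x}u_b\cdots u_{a-1}p_{a,x}=u_{b,x}^*u_{a,x}=0,\qquad b<a,
\]
by the orthogonality $u_{i,m}^*u_{j,m}=0$ established in the proof of Theorem~\ref{prop:aiso}. Universality of $\mathcal A$ then gives a $*$-homomorphism $\Phi:\mathcal A\to C(\mathrm{Qut}(\mathcal U_{K_n}))$.

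\emph{From $C(\mathrm{Qut}(\mathcal U_{K_n}))$ to $\mathcal A$.} In $\mathcal A$ set $w_n=1$ and $w_a=u_au_{a+1}\cdots u_{n-1}$, so that $w_bw_a^*=u_b\cdots u_{a-1}$. Define $\tilde u_{a,x}=w_a^*p_{a,x}$. Each $\tilde u_{a,x}$ is clearly a partial isometry, so it remains to check that $\tilde U=(\tilde u_{a,x})$ is a bi-unitary. Three of the four identities are immediate:
\begin{itemize}
\item $\sum_x\tilde u_{a,x}\tilde u_{b,x}^*=w_a^*\bigl(\sum_xp_{a,x}p_{b,x}\bigr)w_b=\delta_{ab}$, since $p_{a,x}p_{b,x}=0$ for $a\ne b$ in a magic unitary;
\item $\sum_a\tilde u_{a,x}^*\tilde u_{a,y}=\sum_ap_{a,x}p_{a,y}=\delta_{xy}$;
\item the off-diagonal entries $\sum_a\tilde u_{a,x}\tilde u_{a,y}^*$ with $x\ne y$ vanish.
\end{itemize}

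\emph{Main obstacle.} The remaining identity is $\sum_a q_{a,x}=1$, where $q_{a,x}:=w_a^*p_{a,x}w_a$. The defining relation $p_{b,x}w_bw_a^*p_{a,x}=0$ says precisely that, for fixed $x$, the projections $q_{a,x}$, $a\in\mathbb N_n$, are mutually orthogonal. Hence $P_x=\sum_aq_{a,x}$ is a projection, so $1-P_x\ge0$. On the other hand, for each fixed $a$ we have $\sum_xq_{a,x}=w_a^*w_a=1$, so
\[
\sum_xP_x=\sum_a\sum_xq_{a,x}=n.
\]
Therefore $\sum_x(1-P_x)=0$ is a sum of positive elements equal to zero, which forces $P_x=1$ for every $x$. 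Universality (via Theorem~\ref{prop:aiso}) now yields a $*$-homomorphism $\Psi$ sending $u_{i,j}^*u_{k,l}\mapsto\tilde u_{i,j}^*\tilde u_{k,l}$.

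\emph{Inverses.} It remains to check on generators that $\Psi\circ\Phi$ and $\Phi\circ\Psi$ are the identity. For $\Psi\circ\Phi$: $\Psi(p_{a,x})=\tilde u_{a,x}^*\tilde u_{a,x}=p_{a,x}$, and
\[
\Psi(u_a)=\sum_{x,y}p_{a,x}w_aw_{a+1}^*p_{a+1,y}=w_aw_{a+1}^*=u_a.
\]
For $\Phi\circ\Psi$: $\Phi(w_a)=v_av_n^*$, so
\[
\Phi(\tilde u_{a,x})=v_nv_a^*p_{a,x}=v_nu_{a,x}.
\]
Consequently $\Phi(\tilde u_{i,j}^*\tilde u_{k,l})=u_{i,j}^*v_n^*v_nu_{k,l}=u_{i,j}^*u_{k,l}$. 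The unitary $v_n$ cancels here, which is exactly why only the words $u^*u$, and not the $u_{i,j}$ themselves, are recovered.
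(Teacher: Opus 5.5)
Your proof is correct and follows the paper's route: mutually inverse homomorphisms built from $v_a=\sum_x u_{a,x}^*$ in one direction and from the bi-unitary $(w_a^*p_{a,x})$ in the other (the paper normalizes with $v_1=1$ rather than $w_n=1$). Your orthogonality-plus-counting argument for $\sum_a w_a^*p_{a,x}w_a=1$ in fact supplies a justification the paper omits, since it simply asserts the corresponding fourth displayed identity; your list of bi-unitarity conditions also skips $\sum_x\tilde u_{a,x}^*\tilde u_{b,x}=\delta_{a,b}$, but that one is immediate from $\sum_x p_{a,x}=1$ and the defining relation $p_{b,x}w_bw_a^*p_{a,x}=0$ (together with its adjoint).
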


\begin{proof}
    Let $v_a=\sum_{x=1}^n u^*_{a,x}$, $a\in\mathbb N_n$ and set $\tilde
p_{a,x}=v_au_{a,x}$, $\tilde u_a=v_av_{a+1}^*$, $a\in\mathbb N_{n-1}$. Then $\tilde
p_{a,x}$ and $\tilde u_a\in C(\mathrm{Qut}(\mathcal U_{K_n}))$. We have  $u_{a,x}^*u_{b,y}=\tilde p_{a,x}v_av_b^*\tilde p_{b,y}$,
$v_av_b^*=\tilde u_a\tilde u_{a+1}\ldots \tilde u_{b-1}$, if $b>a$, and
$v_av_b^*=(v_bv_a^*)^*$, otherwise, showing that $\tilde p_{a,x}$  and $\tilde
u_a$ generate $C(\mathrm{Qut}(\mathcal U_{K_n}))$. Moreover,
from the proof of the previous lemma we get that $\tilde u_a$'s are unitary,
$\tilde p=(\tilde p_{a,x})_{a,x=1}^n$ is a projection permutation matrix and
    \[\tilde p_{b,x}\tilde u_b\ldots \tilde u_{a-1}\tilde
p_{a,x}=u_{b,x}^*u_{a,x}=0
    \] for $b<a$, as follows from the proof of Theorem \ref{prop:aiso}. Therefore, we get a  $*$-homomorphism $\varphi$ from $C(\mathrm{Qut}(\mathcal U_{K_n}))$ to $\mathcal A$, given by $\varphi(\tilde
p_{a,x})=p_{a,x}$, $\varphi(\tilde u_a)=u_a$.

For the generators $u_a$, $a\in\mathbb N_{n-1}$,  of ${\mathcal A}$, let $v_1=1$, $v_a=u_{a-1}^*\ldots
u_1^*$, $a>1$. Clearly,  $v_a$ and $p_{a,x}$ generate $\mathcal A$.
Let $ V=\diag ( v_1,\ldots,  v_a)$. Then $V$ is a unitary diagonal block matrix such that 
$V^*P=(
v_a^*p_{a,x})_{a,x=1}^n$ is a bi-unitary block matrix:
\begin{align*}
 &\sum_{a=1}^np_{a,y}v_av_a^*p_{a,x}=\sum_{a=1}^np_{a,y}p_{a,x}=\delta_{x,y} I\\
 &\sum_{x=1}^nv_a^*p_{a,x}p_{b,x}v_b=v_a^*\Bigl(\sum_{x=1}^np_{a,x}p_{b,x}\Bigr)v_b=\delta_{a,b} I\\
 &\sum_{x=1}^np_{b,x}v_bv_a^*p_{a,x}=\sum_{x=1}^n p_{b,x}u_b\ldots
u_{a-1}p_{a,x}=\delta_{a,b}I,\quad b<a,\\
 &\sum_{a=1}^nv_a^*p_{a,x}p_{a,y}v_a=\delta_{x,y} I.
 \end{align*}
 Moreover, using $\sum_{y=1}^np_{b,y}=1$ for all $b\in \mathbb N_n$, it is easy to see that the elements $p_{b,y}u_b\ldots u_{a-1}p_{a,x}$, $b\leq a$ (where in the case $b=a$ we think of the latter to be equal to $p_{a,y}p_{a,x}$)
generate $\mathcal A$. 
  Therefore, we obtain a $*$-homomorphism $\psi: \mathcal A\to C(\mathrm{Qut}(\mathcal U_{K_n}))$,
  $\psi(p_{b,y}u_b\ldots u_{a-1}p_{a,x})=u_{b,y}^*u_{a,x}$.

  We have $$\psi(\varphi(\tilde
u_a))=\psi(u_a)=\psi\Bigl(\sum_{x,y=1}^np_{a,y}u_ap_{a+1,x}\Bigr)=\sum_{x,y=1}^nu_{a,y}^*u_{a+1,x}=v_av_{a+1}^*
=\tilde u_a,$$
and 
  $$\psi(\varphi(\tilde p_{a,x}))=\psi(p_{a,x})=u_{a,x}^*u_{a,x}=\tilde p_{a,x},$$
i.e. $\psi\circ\varphi=\text{id}$. Similarly,
  we obtain $\varphi\circ\psi=\text{id}$.
\end{proof}
Let $C^*(\mathbb F_n)$ be the $C^*$-algebra of the free group $\mathbb F_n$ with
$n$ generators.  Write $w_i$, $1\leq i\leq n$, for the canonical generators of
$C^*(\mathbb F_n)$.
\begin{corollary}
 The map $\varphi\colon C(\mathrm{Qut}(\mathcal U_{K_n}))\to
C^*(\mathbb F_{n-1})$, $p_{a,x}\mapsto\delta_{a,x}1$, $u_i\mapsto w_i$, $a,x\in\mathbb N_n$, $i\in\mathbb N_{n-1}$, is a
surjective $*$-homomorphism.
\end{corollary}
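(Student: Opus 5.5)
The plan is to work in the presentation of $C(\mathrm{Qut}(\mathcal U_{K_n}))$ given by Theorem~\ref{generator}. By that theorem it is the universal $C^*$-algebra $\mathcal A$ generated by projections $p_{a,x}$ and unitaries $u_i$, $i\in\mathbb N_{n-1}$, subject only to the magic-unitary relations $\sum_a p_{a,x}=\sum_x p_{a,x}=1$ and the relations
\[
p_{b,x}u_b\cdots u_{a-1}p_{a,x}=0,\qquad b<a.
\]
So, by the universal property, it suffices to check that the proposed images in $C^*(\mathbb F_{n-1})$ satisfy these relations. Then $\varphi$ extends uniquely to a $*$-homomorphism.

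The verification runs as follows.
\begin{itemize}
\item The elements $\delta_{a,x}1$ are projections, being $0$ or $1$.
\item The matrix $(\delta_{a,x}1)_{a,x}$ is the identity permutation matrix, so all row and column sums equal $1$.
\item The $w_i$ are unitaries.
\item For $b<a$ the product $\delta_{b,x}\,w_b\cdots w_{a-1}\,\delta_{a,x}$ contains the factor $\delta_{b,x}\delta_{a,x}=0$, because $a\ne b$. Since these factors are scalars they can be pulled out, and the product vanishes.
\end{itemize}
Hence $\varphi$ is well defined.

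Surjectivity is immediate: the image is a $C^*$-subalgebra containing the generators $w_1,\dots,w_{n-1}$ of $C^*(\mathbb F_{n-1})$, so it is everything.

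There is no real obstacle. The only point needing care is that one must use the isomorphism of Theorem~\ref{generator}, so that the elements $p_{a,x}$ and $u_i$ are understood as the corresponding elements $\tilde p_{a,x}=v_au_{a,x}$ and $\tilde u_a=v_av_{a+1}^*$ of $C(\mathrm{Qut}(\mathcal U_{K_n}))$. Read this way, the statement is just the composition of that isomorphism with the homomorphism $\mathcal A\to C^*(\mathbb F_{n-1})$ constructed above.
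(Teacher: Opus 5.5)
Your proposal is correct and follows the same route as the paper: it checks that the proposed images satisfy the defining relations of the presentation in Theorem~\ref{generator} and then invokes the universal property. You also spell out surjectivity (the image is a $C^*$-subalgebra containing the generators $w_i$), which the paper leaves implicit.
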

\begin{proof}
   As $\sum_{x=1}^n\varphi(p_{a,x})=\sum_{a=1}^n\varphi(p_{a,x})=1$ and
\[
\varphi(p_{b,x})\varphi(u_b\ldots u_{a-1})\varphi(p_{a,x})=0
\]
for $b\ne  a$, the
statement now follows from the theorem.
\end{proof}
\begin{corollary}
The game algebra $C(\mathrm{Qut}(\mathcal U_{K_n}))$ is non-commutative for all $n\geq 3$. 
\end{corollary}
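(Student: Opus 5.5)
The corollary will follow directly from the previous corollary, which gives a surjective $*$-homomorphism $\varphi\colon C(\mathrm{Qut}(\mathcal U_{K_n}))\to C^*(\mathbb F_{n-1})$. A quotient of a commutative $C^*$-algebra is commutative. So it suffices to show that $C^*(\mathbb F_{n-1})$ is non-commutative whenever $n-1\geq 2$, that is, $n\geq 3$.

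For that, I will exhibit two unitaries $W_1,W_2\in M_2(\mathbb C)$ that do not commute, for instance
\[
W_1=\begin{pmatrix}0&1\\1&0\end{pmatrix},\qquad W_2=\begin{pmatrix}1&0\\0&-1\end{pmatrix},
\]
and set $W_i=1$ for $3\le i\le n-1$. By the universal property of $C^*(\mathbb F_{n-1})$, the assignment $w_i\mapsto W_i$ extends to a representation. Since $W_1W_2=-W_2W_1\neq W_2W_1$, the generators $w_1,w_2$ do not commute. Consequently $\varphi$ takes noncommuting values on the preimages $u_1,u_2$, and so $u_1u_2\ne u_2u_1$ in $C(\mathrm{Qut}(\mathcal U_{K_n}))$.

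Alternatively, I can compose $\varphi$ with this two-dimensional representation. This gives directly a non-commutative finite-dimensional representation of $C(\mathrm{Qut}(\mathcal U_{K_n}))$, namely $p_{a,x}\mapsto\delta_{a,x}I_2$, $u_i\mapsto W_i$. Its validity can also be checked directly against the relations in Theorem \ref{generator}: the $p_{a,x}$ form the identity permutation, and the relation $p_{b,x}u_b\cdots u_{a-1}p_{a,x}=0$ for $b<a$ holds trivially because $p_{b,x}p_{a,x}$-type sandwiches vanish when $b\neq a$ (one of the two diagonal projections is zero).

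There is no real obstacle here. The only point to keep straight is that $u_i$ denotes the abstract generators from Theorem \ref{generator}, i.e.\ the elements $v_iv_{i+1}^*$ inside $C(\mathrm{Qut}(\mathcal U_{K_n}))$. Also, the case $n=3$ is exactly where $\mathbb F_{n-1}$ first has two generators, which is why the bound $n\ge 3$ appears, in contrast with Proposition \ref{graph_K2}.
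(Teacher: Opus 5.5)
Your proposal is correct and follows the same route as the paper. The paper leaves this corollary without a separate proof: it is read off from the preceding corollary, the surjection $C(\mathrm{Qut}(\mathcal U_{K_n}))\to C^*(\mathbb F_{n-1})$, together with the non-commutativity of $C^*(\mathbb F_{n-1})$ once $n-1\geq 2$. Your explicit two-dimensional representation simply makes that last fact concrete.
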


Next example presents some other in nature $*$-representations of $C(\mathrm{Qut}(\mathcal U_{K_3}))$ showing also that the category of
$*$-representations of $C(\mathrm{Qut}(\mathcal U_{K_3}))$ is at least  as complicated as the category of 
$*$-representations of the free
product of the Cuntz algebras $\mathcal O_2$.

\begin{example}
 Let $H$ be a Hilbert space, and let isometries $t_1,t_2,s_1,s_2$ in $B(H)$ satisfy the Cuntz algebra relations 
 \[
  t_1t_1^* + t_2 t_2^* = I, \quad s_1s_1^* + s_2s_2^* = I,
 \]
 i.e., determine a representation of the free product $\mathcal O_2*\mathcal
O_2$, amalgamated over the units. 
In the space $\mathcal H = H\oplus H\oplus H$, consider the operators
\begin{align*}
 p_{1,1} = p_{2,3} = p_{3,2} = \begin{pmatrix} I&0&0\\0&0&0\\0&0&0\end{pmatrix},
 \\
 p_{1,2} = p_{2,1} = p_{3,3} = \begin{pmatrix} 0&0&0\\0&I&0\\0&0&0\end{pmatrix},
 \\
 p_{1,3} = p_{2,2} = p_{3,1} = \begin{pmatrix} 0&0&0\\0&0&0\\0&0&I\end{pmatrix},
\end{align*}
and
\[
 u_1 =
 \begin{pmatrix}
 t_1^* & 0&0\\ s_1t_2^* & s_2s_1^* &0 \\ 0& t_2 s_2^* &t_1
 \end{pmatrix},
 \quad
 u_2 =
 \begin{pmatrix}
 t_1 t_2^* & 0&t_2t_1^*\\ s_2t_1^* & s_1 &0 \\ 0& 0 &t_2^*
 \end{pmatrix}.
\]
It is a straightforward verification that $p_{a,x}$, $a,x =1,2,3$, are projections
for which
\[
 \sum_{b=1}^3 p_{b,x} = \sum_{y=1}^3 p_{a,y} =I, \quad a,x=1,2,3,
\]
$u_1$ and $u_2$ are unitary,
and
\[
 p_{1,k}u_1 p_{2,k} = p_{2,k}u_2 p_{3,k} = p_{1,k}u_1u_2 p_{3,k}=0,\quad k=1,2,3,
\]
and therefore determine a $*$-representation of the category of
$*$-representations of $C(\mathrm{Qut}(\mathcal U_{K_3}))$, by Theorem~\ref{generator}.
Moreover, the representation of $C(\mathrm{Qut}(\mathcal U_{K_3}))$ given by $u_1,u_2$, $p_{a,x}$, $a,x =1,2,3$,  is irreducible iff
the corresponding representation of $\mathcal O_2*\mathcal O_2$ is irreducible,
and two such representations of $C(\mathrm{Qut}(\mathcal U_{K_3}))$ are unitarily equivalent iff the corresponding representations of
$\mathcal O_2*\mathcal O_2$ are unitarily equivalent.

Indeed, let $\tilde t_1,\tilde t_2,\tilde s_1,\tilde s_2$  be another
representation of $\mathcal O_2 *\mathcal O_2$ in $\tilde H$, let  $\tilde
p_{a,x}$, $a,x=1,2,3$ be the corresponding projections in $\tilde{ \mathcal H} =
\tilde H \oplus \tilde H \oplus \tilde H$, and let $\tilde u_1,\tilde u_2$ be a pair of
the corresponding block unitaries. Let $C$ be a bounded operator from $\tilde{\mathcal H}$ to $\mathcal H$
intertwining $u_1,u_2$, $p_{a,x}$, $a,x=1,2,3$, and $\tilde u_1,\tilde u_2$, $\tilde
p_{a,x}$, $a,x=1,2,3$. Then $C=\diag(C_1,C_2,C_3)$, $C_1,C_2,C_3\colon \tilde H \to H$, and relations
\[
 u_1C =C\tilde u_1,\quad  u_1^*C = C \tilde u_1^*,\quad u_2C = C \tilde u_2,\quad  u_2^*C = C \tilde u_2^*,
\]
are equivalent to the following set of relations
\begin{gather*}
t_1C_1 =C_1\tilde t_1,\quad t_1^* C_1=C_1 \tilde t_1^*, \quad  t_2C_3 =
C_3\tilde t_2, \quad  t_2^*C_3 = C_3\tilde t_2^*,
\\
t_1C_3=C_3\tilde t_1, \quad t_1^*C_3 = C_3\tilde t_1^*, \quad  s_1 C_2=C_2
\tilde s_1, \quad s_1^* C_2 =C_2 \tilde s_1^* ,
\\
\label{eq:k3-t2t1}
t_2 t_1^* C_1=C_1 \tilde t_2 \tilde t_1^* ,
\quad
t_1 t_2^* C_1=C_1 \tilde t_1 \tilde t_2^*,
\\
\label{eq:k3-t1s2}
t_1s_2^*C_2 = C_1 \tilde t_1\tilde s_2^*,
\quad
s_2t_1^*C_1 = C_2 \tilde s_2\tilde t_1^*,
\\
\label{eq:k3-t1t2}
t_1t_2^*C_1 =C_3 \tilde t_1\tilde t_2^*,
\quad
t_2t_1^*C_3 = C_1\tilde t_2\tilde t_1^*,
\\
\label{eq:k3-s1t2}
s_1t_2^* C_1= C_2\tilde s_1\tilde t_2^*,
\quad
 t_2s_1^*C_2= C_1 \tilde t_2\tilde s_1^*,
\\
\label{eq:k3-s1s2} s_2s_1^* C_2=C_2\tilde s_2\tilde s_1^*,
\quad
s_1s_2^*C_2 =C_2 \tilde s_1\tilde s_2^*,
\\
\label{eq:k3-s2r2}
 s_2t_2^*C_3 = C_2\tilde s_2\tilde t_2^* ,
\quad
t_2 s_2^*C_2=C_3\tilde t_2 \tilde s_2^*,
\end{gather*}
which imply
\begin{gather*}
 t_2C_1 =C_1\tilde t_2, \quad t_2^* C_1 = C_1 \tilde t_2^*, \quad t_2^*C_1 = C_3
\tilde t_2^*, \quad t_2 C_3 = C_1\tilde t_2,
 \\
 s_2^* C_2 = C_1 \tilde s_2^*, \quad  s_2C_1 =C_2 \tilde s_2, \quad s_1C_1 = C_2
\tilde s_1,\quad s_1^*C_2 = C_1 \tilde s_1^*,
 \\
 s_2C_2 = C_2\tilde s_2, \quad  s_2^*C_2 = C_2 \tilde s_2^*, \quad t_2^*C_3 =
C_2 \tilde t_2^*, \quad t_2C_2 = C_3 \tilde t_2,
\end{gather*}
so that $C_1=C_2 = C_3$ intertwines $t_1,t_2,s_1,s_2$ and $\tilde t_1,\tilde
t_2,\tilde s_1, \tilde s_2$. This proves the statements about irreducibility and
unitary equivalence above.
\end{example}

\section{Nonlocal symmetries}

\subsection{Nonlocal symmetries for $K_3$}
We define nonlocal symmetry of a quantum graph in a similar way as it is done
for classical graphs.
We say a quantum graph $\mathcal U$ admits   nonlocal symmetry if there is a
perfect QNS correlations for the automorphism game $\text{Iso}(\mathcal U,\mathcal U)$ of $\mathcal U$, which is not local. We refer the reader to \cite{bhtt} for the  definition of local, quantum and quantum commuting QNS strategies. In what follows we will use the following characterization of perfect QNS correlation for isomorphism game, see \cite[Theorem 7.10]{bhtt}.
\begin{theorem}\label{th_alg}
Let $G$, $H$ be simple undirected graphs such that $|V(G)|=|V(H)|=n$. The following are equivalent for a QNS bicorrelation $\Gamma : M_{n}\otimes M_n\to
M_n\otimes M_n$:
\begin{itemize}
\item[(i)] $\Gamma$ is a perfect quantum commuting (resp. quantum/local)
strategy for the isomorphism game
$\text{Iso}(\mathcal U_{G},\mathcal U_{H})$;
\item[(ii)] there exists a trace $\tau$
(resp. a trace $\tau$ that factors through a finite dimensional/abelian
*-representation)
of $\mathcal A(\text{Iso}(\mathcal U_{G},\mathcal U_{H}))$ such that
\begin{equation}\label{eq_agains}
\Gamma(\epsilon_{x,x'} \otimes \epsilon_{y,y'})
= \left(\tau(u_{a,x}^*u_{a',x'}u_{b',y'}^*u_{b,y})\right)_{a,a',b,b'\in \mathbb N_n}, \ \ \
x,x',y,y'\in \mathbb N_n.
\end{equation}
\end{itemize}
\end{theorem}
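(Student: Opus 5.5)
Since this is \cite[Theorem 7.10]{bhtt} specialised to the quantum graphs $\mathcal U_G$, $\mathcal U_H$, I would not build new machinery. The plan is to take the general correspondence between QNS bicorrelations and traces on $C(\mathbb P\mathcal U_n^+)$ and check that perfectness for $\text{Iso}(\mathcal U_G,\mathcal U_H)$ cuts this algebra down exactly to the quotient $\mathcal A(\text{Iso}(\mathcal U_G,\mathcal U_H))$. The starting point is the characterisation of quantum commuting QNS bicorrelations from \cite{bhtt}. A map $\Gamma$ is such a bicorrelation iff there is a tracial state $\tau$ on $C(\mathbb P\mathcal U_n^+)$ with $\Gamma(\epsilon_{x,x'}\otimes\epsilon_{y,y'})$ given by formula (\ref{eq_agains}). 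The same holds with $\tau$ factoring through an abelian, respectively finite-dimensional, representation for the local, respectively quantum, classes. I would take this as the black box.

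The second step translates perfectness into algebra. The game $\text{Iso}(\mathcal U_G,\mathcal U_H)$ requires $\Gamma$ to send states supported on $\mathcal U_G$ into states supported on $\mathcal U_H$, and similarly for the complementary pieces (the diagonal and the non-edges). Using the bicorrelation property, this should be equivalent to
\[
\langle \Gamma(\xi\xi^*)\eta,\eta\rangle=0
\]
for basis vectors $\xi=e_x\otimes e_y$ and $\eta=e_a\otimes e_b$ whenever $\text{rel}(x,y)\neq\text{rel}(a,b)$. By (\ref{eq_agains}), this quantity equals $\tau(w^*w)$ for
\[
w=u_{b,y}\,u_{a,x}^*\quad\text{or its adjoint, up to ordering conventions.}
\]
Because a trace is positive, each such vanishing condition is a single scalar equation, with no cancellation between terms to worry about. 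The condition $\text{rel}(x,y)\neq\text{rel}(a,b)$ is exactly the pattern of relations $u_{a,x}u_{b,y}^*=0$ defining the game algebra.

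The third step passes between $\tau$ and the quotient. Take the GNS representation $\pi_\tau$ of $C(\mathbb P\mathcal U_n^+)$, or of $C(\mathcal U_n^+)$ after extending $\tau$ suitably. Traciality together with $\tau(w^*w)=0$ gives $\pi_\tau(w)=0$. Hence $\pi_\tau$ factors through $\mathcal A(\text{Iso}(\mathcal U_G,\mathcal U_H))$, and $\tau$ descends to a trace on it; this proves (i)$\Rightarrow$(ii). The converse (ii)$\Rightarrow$(i) is immediate: a trace on the quotient pulls back to a trace on $C(\mathbb P\mathcal U_n^+)$ that annihilates the relevant $w^*w$, so the corresponding $\Gamma$ is a perfect bicorrelation. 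Factorisation through abelian or finite-dimensional representations survives both directions, because it only involves composing with the quotient map or passing to a quotient of an image.

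I expect the main obstacle to be the subtle point in the third step. The relation elements $u_{a,x}u_{b,y}^*$ do not lie in $C(\mathbb P\mathcal U_n^+)$; only even words of the form $u^*u$ do. To handle this I would first write $\tau(w^*w)$ in terms of the even words $u_{a,x}^*u_{a',x'}$ that do lie in $C(\mathbb P\mathcal U_n^+)$. I would then show that the products $u_{a',x'}^*u_{a,x}u_{b,y}^*u_{b',y'}$ lie in the kernel of $\pi_\tau$, which is what the paper's convention about the subalgebra generated by $u^*u$ requires.
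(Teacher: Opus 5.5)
The paper does not prove this statement; it quotes it from \cite[Theorem 7.10]{bhtt}. Your overall frame is natural: start from the bhtt correspondence between QNS bicorrelations and traces on $C(\mathbb P\mathcal U_n^+)$, then cut down by perfectness. However, your second step misidentifies the game. The quantum game $\mathrm{Iso}(\mathcal U_G,\mathcal U_H)$ has only two classes of vectors: $\mathcal U_G$ and $\mathcal U_G^\perp=\mathrm{span}\{e_x\otimes e_y: x\not\sim y\}$. The second class contains the diagonal vectors $e_x\otimes e_x$ together with the non-edges. The quantum counterpart of ``equality'' is the maximally entangled vector $n^{-1/2}\sum_x e_x\otimes e_x$, not the diagonal subspace, and it is already handled by the concurrency conditions built into QNS bicorrelations.

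If you use the three-valued $\mathrm{rel}$ of the introduction, as your ``complementary pieces (the diagonal and the non-edges)'' indicates, you impose extra relations $u_{a,x}u_{b,x}^*=0$ for $a\ne b$ and $u_{a,x}u_{a,y}^*=0$ for $x\ne y$. These force every $u_{a,x}$ to be a partial isometry, as in the proof of Theorem~\ref{prop:aiso}. For the edgeless graph on $n$ vertices they reproduce exactly the relations of $C(\mathrm{Qut}(\mathcal U_{K_n}))$. The true algebra for that graph has no relations at all, which the paper uses for $K_1\cup K_1\cup K_1$.

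Concretely, take two isolated vertices. Then $\mathcal U_G=\{0\}$, so every QNS bicorrelation is perfect. The scalar bi-unitary $u_{1,1}=u_{1,2}=u_{2,1}=-u_{2,2}=1/\sqrt2$ gives a local perfect strategy with $\tau(u_{1,1}^*u_{1,1}u_{2,1}^*u_{2,1})=1/4\ne0$, which your condition forbids. So your claim that the mismatch pattern ``is exactly'' the defining relations is false. The correct reduction is this: perfectness holds iff $\tau(u_{a,x}^*u_{a,x}u_{b,y}^*u_{b,y})=0$ whenever exactly one of $x\sim y$, $a\sim b$ holds, with the convention $a\not\sim a$. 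It uses only positivity of $\Gamma$ and the fact that $P_{\mathcal U_G}$, $P_{\mathcal U_H^\perp}$ are sums of rank-one basis projections, not the bicorrelation property.

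Your third step also does not work as written. The element $w=u_{b,y}u_{a,x}^*$ is not in $C(\mathbb P\mathcal U_n^+)$, and a trace on that subalgebra need not extend to a trace on $C(\mathcal U_n^+)$. So ``traciality gives $\pi_\tau(w)=0$'' is unavailable. Work instead with $p=u_{a,x}^*u_{a,x}$ and $q=u_{b,y}^*u_{b,y}$. From $\tau(pq)=\tau((p^{1/2}q^{1/2})^*(p^{1/2}q^{1/2}))=0$ you get $\pi_\tau(p)\pi_\tau(q)=0$. The norm approximations $u_{a,x}=\lim_{\epsilon\to0}u_{a,x}p(p+\epsilon)^{-1}$ and $u_{b,y}^*=\lim_{\epsilon\to0}q(q+\epsilon)^{-1}u_{b,y}^*$ then show that $\pi_\tau$ kills every $u_{a',x'}^*u_{a,x}u_{b,y}^*u_{b',y'}$.

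That still does not finish (i)$\Rightarrow$(ii) under the paper's convention. You must produce a bi-unitary $\tilde U$ with $\tilde u_{a,x}\tilde u_{b,y}^*=0$ whose words $\tilde u^*\tilde u$ reproduce $\pi_\tau$. One way to do this:
\begin{itemize}
\item Realise $\pi_\tau$ on a $C(\mathbb P\mathcal U_n^+)$-invariant subspace $K_0$ of a representation $\pi$ of $C(\mathcal U_n^+)$, and put $K_1=\overline{\mathrm{span}}\,\pi(u_{c,z})K_0$.
\item Check that $(\pi(u_{c,z})|_{K_0})$ and its transpose are unitaries $K_0^n\to K_1^n$, so $K_0\cong K_1$.
\item Compose with a unitary $K_1\to K_0$.
\end{itemize}
The required relation then holds because $\pi(u_{a,x}u_{b,y}^*)$ vanishes on $K_1$, by the previous step.
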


We will say that the quantum graphs ${\mathcal U}_G$, ${\mathcal U}_H$ are qc-isomorphic (q-isomorphic, loc-isomorphic)  and write ${\mathcal U}_G\simeq_{\rm qc} {\mathcal U}_H$ (resp. ${\mathcal U}_G\simeq_{\rm q} {\mathcal U}_H$, ${\mathcal U}_G\simeq_{\rm loc} {\mathcal U}_H$) if there exists a perfect quantum commuting (resp. quantum, local) strategy for the isomorphism game $\text{Iso}(\mathcal U_{G},\mathcal U_{H})$. Note that by \cite[Proposition 7.8]{bhtt}, ${\mathcal U}_G$ and ${\mathcal U}_H$ are  loc-isomorphic if and only if $G$ and $H$ are isomorphic. 

\begin{proposition}\label{UKn}
 $\mathcal U_{K_3}$ admits  nonlocal symmetry.   
\end{proposition}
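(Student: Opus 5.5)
The plan is to produce a perfect quantum commuting (in fact quantum) strategy from a finite-dimensional trace on $C(\mathrm{Qut}(\mathcal U_{K_3}))$, using Theorem~\ref{th_alg}, and then show the resulting $\Gamma$ is not in the convex hull of correlations coming from characters. By Theorem~\ref{generator} and the corollary after it, there is a surjection $C(\mathrm{Qut}(\mathcal U_{K_3}))\to C^*(\mathbb F_2)$ with $p_{a,x}\mapsto\delta_{a,x}1$ and $u_i\mapsto w_i$. I will compose it with a finite-dimensional non-commutative representation $w_1\mapsto A$, $w_2\mapsto B$ ($A,B$ small unitary matrices, to be chosen), and take $\tau$ to be the normalized trace. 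In such a representation $u_{a,x}=\delta_{a,x}v_a^*$, where $v_1=1$, $v_2=A^*$, $v_3=B^*A^*$. Hence by \eqref{eq_agains} the only possibly nonzero entries of $\Gamma(\epsilon_{x,x'}\otimes\epsilon_{y,y'})$ sit at $(a,a',b,b')=(x,x',y,y')$, and there they equal
\[
\tau\bigl(v_xv_{x'}^*v_{y'}v_y^*\bigr).
\]

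Next I would analyse local strategies. A local $\Gamma$ comes from $\tau=\int\chi\,d\mu$ over characters $\chi$ of $C(\mathrm{Qut}(\mathcal U_{K_3}))$. Each character sends $(p_{a,x})$ to a permutation matrix $\sigma$, and sends the $u_i$ to scalars in $\mathbb T$. A character with permutation $\sigma$ contributes only at entries with $a=\sigma(x)$, $a'=\sigma(x')$, and so on. Since $\Gamma$ is a channel with positive contributions on diagonal entries $x=x'$, $y=y'$, the support condition forces $\mu$ to be concentrated on characters with $\sigma=\mathrm{id}$. For these, $p_{a,x}=\delta_{a,x}$ and the character is a point $(z_1,z_2)\in\mathbb T^2$. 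So locality of $\Gamma$ means that the numbers $\tau(v_xv_{x'}^*v_{y'}v_y^*)$ are the moments $\int\overline{z}^{\,\alpha}\,d\mu(z)$ of a probability measure $\mu$ on $\mathbb T^2$, evaluated at the abelianized exponents $\alpha$ of these words.

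The main obstacle is the last step: choosing $A,B$ so that these specific moments are incompatible with any $\mu$. Traciality makes conjugate words indistinguishable, so one cannot simply compare a word with its abelianization. What I would try first is to find two words of the form $v_xv_{x'}^*v_{y'}v_y^*$ with equal image in $\mathbb Z^2$ but not conjugate in $\mathbb F_2$. A candidate pair to test is $v_2v_1^*v_1v_3^*=A^*AB=B$ against another pattern yielding the same exponent. Such a pair forces equal values locally, while for non-commuting $A,B$ one can arrange unequal traces. If no such pair exists among the $81$ available words, I would fall back on positivity. In the local case the matrix $\bigl(\int z^{\alpha_i-\alpha_j}d\mu\bigr)_{i,j}$ built from available moments must be positive semidefinite and Toeplitz-consistent, so I would pick $A,B$ (for instance, Pauli-type or rotation matrices) violating such a moment/Bell-type inequality. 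This is the computation I expect to be delicate.
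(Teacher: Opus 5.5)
Your reduction is correct as far as it goes. Composing with the quotient onto $C^*(\mathbb F_2)$ gives $u_{a,x}=\delta_{a,x}v_a^*$, and the entries of $\Gamma$ are $\tau(v_xv_{x'}^*v_{y'}v_y^*)$. A local model must be carried by characters with $\sigma=\mathrm{id}$, which send $v_xv_{x'}^*$ to $\zeta_x\bar\zeta_{x'}$ for some $\zeta\in\mathbb T^3$. But the proposal stops exactly at the decisive step: you give no $A,B$ and no witness of nonlocality, and neither route you sketch works.

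Write $S=\{v_xv_{x'}^*\}=\{1,v_2^{\pm1},v_3^{\pm1},(v_2v_3^*)^{\pm1}\}$, so the available words are the products $gh$ with $g,h\in S$. Checking the possible splittings of each exponent shows that any two such words with the same abelianization are conjugate in $\mathbb F_2$. For instance, your $B=v_2v_3^*$ competes only with $v_3^*v_2$, and $v_2^2v_3^*$ only with $v_2v_3^*v_2$. So by traciality the data is a function of the exponent alone, and your first test never fires. The positivity fallback is also vacuous: the matrix you would test is $(\tau(g^*h))_{g,h\in S}$. This is a Gram matrix in $L^2(\tau)$, hence positive semidefinite and ``Toeplitz-consistent'' for every choice of $A,B$.

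The missing idea is to use the kernel of this Gram matrix rather than its positivity. If $\sum c_{x,x'}v_xv_{x'}^*=0$ in $L^2(\tau)$, then in a local model $\int|\sum c_{x,x'}\zeta_x\bar\zeta_{x'}|^2\,d\mu=0$, so the same relation holds $\mu$-a.e.\ for the scalars. Take $v_2=\begin{pmatrix}1&0\\0&-1\end{pmatrix}$ and $v_3=\begin{pmatrix}0&1\\1&0\end{pmatrix}$, so that $A=v_2^*$ and $B=v_2v_3^*$. Then $v_2=v_2^*$, $v_3=v_3^*$ and $v_2v_3^*=-v_3v_2^*$. Hence $\tau(y^*y)=0$ for each of $y=v_2-v_2^*$, $y=v_3-v_3^*$ and $y=v_2v_3^*+v_3v_2^*$, and each of these numbers is a linear combination of entries of $\Gamma$. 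In a local model this forces $\zeta_2\bar\zeta_1,\zeta_3\bar\zeta_1\in\{\pm1\}$ and $\mathrm{Re}(\zeta_2\bar\zeta_3)=0$ almost everywhere. That is impossible, since $\zeta_2\bar\zeta_3=(\zeta_2\bar\zeta_1)\overline{(\zeta_3\bar\zeta_1)}$. Equivalently, the corresponding nonnegative function on $\mathbb T^2$ is bounded below by a positive constant while your trace gives $0$; this is the Bell-type inequality you were looking for.

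With this step supplied, your route is a valid alternative to the paper's. The paper keeps $u_1=u_2$ equal to a cyclic shift and puts the noncommutativity into a Latin-square pattern of the $p_{a,x}$. It then detects nonlocality via $\tau(u_{1,1}^*u_{2,3}u_{3,1}^*u_{2,2})=1/3$, against the value $0$ forced by $p_{1,1}p_{3,1}=0$ in commutative models.
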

\begin{proof}
For the generators $p_{x,a}$, $a,x =1,2,3$, $u_1, u_2$ of $C(\mathrm{Qut}(\mathcal U_{K_3}))$ from
Theorem~\ref{generator}, define
\begin{align*}
 \phi(u_1)& = \phi(u_2) = V=\begin{pmatrix}
   0&0&1\\1&0&0\\0&1&0
  \end{pmatrix},
 \\
 \phi(p_{1,1})& = \phi(p_{2,2}) = \phi(p_{3,3}) = P_1=
  \begin{pmatrix}
   1&0&0\\0&0&0\\0&0&0
  \end{pmatrix},
 \\
 \phi(p_{1,3})& = \phi(p_{2,1}) = \phi(p_{3,2}) = P_2 =\begin{pmatrix}
   0&0&0\\0&1&0\\0&0&0
  \end{pmatrix},
 \\
 \phi(p_{1,2})& = \phi(p_{2,3}) = \phi(p_{3,1}) = P_3 =  \begin{pmatrix}
   0&0&0\\0&0&0\\0&0&1
  \end{pmatrix}.
\end{align*}
Then $\phi$ can be extended to a homomorphism
\[
\phi\colon C(\mathrm{Qut}(\mathcal U_{K_3})) \to M_3(\mathbb
C).
\]
Indeed, since $P_1,P_2,P_3$ are projections and $V$ is unitary, it is sufficient
to verify that
\begin{gather*}
 \sum_{a=1}^3 \phi(p_{x,a})=\sum_{x=1}^3 \phi(p_{x,a})= I,
 \\
 \phi(p_{1,a}) V \phi(p_{2,a}) =  \phi(p_{2,a}) V \phi(p_{3,a}) =  \phi(p_{1,a}) V^2
\phi(p_{3,a}) =0, \quad a=1,2,3.
\end{gather*}
But it is a straightforward verification that these relations hold. Moreover, since
\begin{gather*}
 P_jVP_k \ne 0 \iff (j,k) \in \{(1,3), (2,1), (3,2)\},
 \\
 P_jV^2P_k \ne 0 \iff (j,k) \in \{(1,2), (2,3), (3,1)\},
\end{gather*}
we see that
\begin{align*}
\phi(u_{1,1}^*u_{2,3}) &= \phi(p_{1,1})\phi(u_1)\phi(p_{2,3}) = P_1VP_3 = E_{13},
\\
\phi(u_{1,2}^*u_{2,1}) &= \phi(p_{1,2})\phi(u_1)\phi(p_{2,1}) = P_3VP_2 = E_{32},
\\
\phi(u_{1,3}^*u_{2,2}) &= \phi(p_{1,3})\phi(u_1)\phi(p_{2,2}) = P_2 V P_1 = E_{21},
\\
\phi(u_{2,1}^*u_{3,3}) &= \phi(p_{2,1})\phi(u_2)\phi(p_{3,3}) = P_2VP_1 =E_{21},
\\
\phi(u_{2,2}^*u_{3,1}) &= \phi(p_{2,2})\phi(u_2)\phi(p_{3,1}) = P_1VP_3 = E_{13},
\\
\phi(u_{2,3}^*u_{3,2}) &= \phi(p_{2,3})\phi(u_2)\phi(p_{3,2}) = P_3 V P_2 = E_{32},
\\
\phi(u_{1,1}^*u_{3,2}) &= \phi(p_{1,1})\phi(u_1u_2)\phi(p_{3,2}) = P_1V^2P_2 =
E_{12},
\\
\phi(u_{1,2}^*u_{3,3}) &= \phi(p_{1,2})\phi(u_1u_2)\phi(p_{3,3}) = P_3V^2P_1
=E_{31},
\\
\phi(u_{1,3}^*u_{3,1}) &= \phi(p_{1,3})\phi(u_1u_2)\phi(p_{3,1}) = P_2 V^2 P_3 =
E_{23},
\end{align*}
where $E_{xa}$, $a,x=1,2,3$, are the standard matrix units, and all other elements
$u_{a,x}^*u_{b,y}$, $b>a$, are mapped by $\phi$ to zero.

Define a linear subspace $\mathcal R \subset C(\mathrm{Qut}(\mathcal
U_{K_3}))$ as follows
\[
 \mathcal R = \text{span} \{u_{a,x}^*u_{b,y}u_{a',x'}^*u_{b,'y'},
a,b,a',b',x,y,x',y' =1,2,3\}.
\]

Our aim is to show that on $C(\mathrm{Qut}(\mathcal U_{K_3}))$
there exists a trace $\tau$ which is ``nonlocal'' in the following sense: there
is no commutative $C^*$-algebra $\mathcal A$ such that for all $x \in \mathcal
R$
\[
 \tau(x) = \tau_{\mathcal A}(\pi(x)),
\]
where $\pi\colon C(\mathrm{Qut}(\mathcal U_{K_3})) \to
\mathcal A$ is a homomorphism, and $\tau_{\mathcal A}\in \mathcal A^*$.

We let $\tau = \text{tr} \circ \phi$, where $\phi \colon C(\mathrm{Qut}(\mathcal U_{K_3})) \to M_3(\mathbb C)$ is as constructed
above, and $\text{tr}$ is the standard normalized trace on $M_3(\mathbb C)$.
Then we have
\begin{align*}
 \rho(u_{1,1}^*u_{2,3}u_{3,1}^*u_{2,2})& = \text{tr}(\phi(u_{1,1}^*u_{2,3})
\phi(u_{2,2}^*u_{3,1})^*)
 \\
 & = \text{tr}(E_{13}E_{31}) = \text{tr}(E_{11}) =\frac13.
\end{align*}

On the other hand, for any commutative $C^*$-algebra $\mathcal A$, a  $*$-homomorphism,
$\pi \colon C(\mathrm{Qut}(\mathcal U_{K_3})) \to \mathcal
A$, and $\tau_{\mathcal A}\in \mathcal A^*$, by the commutativity we
have
\begin{align*}
 \tau_{\mathcal A}(\pi(u_{1,1}^*u_{2,3}u_{3,1}^*u_{2,2}))&=\tau_{\mathcal
A}(\pi(p_{1,1}u_1p_{2,3}p_{3,1} u_2^* p_{2,2}))
 \\
 &=\tau_{\mathcal A}(\pi(p_{1,1})\pi(u_1)\pi(p_{2,3})\pi(p_{3,1})\pi(u_2^*)
\pi(p_{2,2}))
 \\
 &=\tau_{\mathcal A}(\pi(p_{1,1})\pi(p_{3,1})\pi(u_1)\pi(u_2^*)\pi(p_{2,3})
\pi(p_{2,2})) =0,
\end{align*}
since $p_{1,1}p_{3,1} = p_{2,3}p_{2,2} =0$ in $C(\mathrm{Qut}(\mathcal
U_{K_3}))$. 
\end{proof}

\begin{remark}
    It was proved in \cite{RobSch21} that the graph $K_n$ does not admit nonlocal symmetry only if $n\geq 5$, meaning that in this case there is  no perfect NS ${\rm qc}$-strategy that cannot be produced classically, i.e. not in the loc-class. This shows that QNS strategies provide more  nonlocal ``symmetries''.  
 \end{remark}

\begin{remark}
We note that the representation $\phi$ from the proof of Proposition \ref{UKn} comes from the bi-unitary 
$\tilde{\mathcal U} = (\tilde u_{a,x})_{a,x=1}^3$  
whose entries are $3\times 3$
matrices and given by 
\begin{gather*}
\tilde u_{1,1} = E_{11}, \quad \tilde u_{1,2}= E_{23}, \quad \tilde u_{1,3} =
E_{32},
 \\
 \tilde u_{2,1} = E_{22}, \quad \tilde u_{2,2} = E_{31}, \quad \tilde u_{2,3} =
E_{13},
\\
\tilde u_{3,1} = E_{33}, \quad \tilde u_{3,2} = E_{12}, \quad \tilde u_{3,3}=
E_{21},
\end{gather*}
$\phi(u_{a,x}^*u_{b,y}) = \tilde u_{a,x}^* \tilde u_{b,y}$, $a,b,x,y =1,2,3$.
It is easy to see that $\phi(u_{a,x}^*u_{b,y}) = \tilde u_{a,x}^* \tilde u_{b,y}$, $a,b,x,y =1,2,3$.
Also notice that for such a choice of $(\tilde u_{a,x})_{a,x=1}^3$, the matrix
$\tilde {\mathcal U}$ is a $9\times 9$ classical permutation matrix. 
\end{remark}

\begin{remark}
 The bi-unitary matrix $\tilde {\mathcal U}$ from the remark  above can be
obtained as a result of a slight modification of the Latin square construction
discussed in \cite[Section 4.1]{RobSch21}.

 Let $A,B,C,D$ be flat unitary (complex Hadamard)  $n\times n$ matrices, that  is unitary matrices whose entries all have the same modulus (necessarily
equal to $\frac{1}{\sqrt{n}}$  when the matrices are $n\times n$).
Define
 \begin{align*}
  \psi_{jk} = {\sqrt{n}}\begin{pmatrix}a_{1j} b_{1k}\\ \vdots \\ a_{nj}
b_{nk}\end{pmatrix}, \quad
  \phi_{jk} ={\sqrt{n}}\begin{pmatrix}c_{1j} d_{1k}\\ \vdots \\ c_{nj}
d_{nk}\end{pmatrix}, \qquad j,k =1,\dots, n,
 \end{align*}
 where $a_{ij}$, $b_{ij}$, $c_{ij}$, $d_{ij}$ are the entries of $A$, $B$, $C$, $D$ respectively.
Then, since the matrices $A$ and $B$ are flat unitary, their entries have
absolute value $1/{\sqrt{n}}$, and
\begin{align*}
\psi_{jk}^* \psi_{jl} = n \sum_{i=1}^n \bar b_{ik} \bar a_{ij} a_{ij} b_{il} =
\sum_{i=1}^n \bar b_{ik}  b_{il} =\delta_{k,l}, \quad j,k,l=1,\dots, n,
\end{align*}
and similarly $\psi_{kj}^* \psi_{lj} =\delta_{k,l}$.
Also, since $\psi_{jk}\psi_{jk}^* = n(a_{lj} b_{lk} \bar b_{mk}\bar
a_{mj})_{l,m=1}^n$, we have
\begin{align*}
 \sum_{j=1}^n \psi_{jk}\psi_{jk}^*& = \Bigl(n\sum_{j=1}^n a_{lj} b_{lk} \bar
b_{mk}\bar a_{mj} \Bigr)_{l,m=1}^n
 \\
 &= \Bigl( nb_{lk} \bar b_{mk}\sum_{j=1}^n a_{lj}\bar a_{mj} \Bigr)_{l,m=1}^n
 =(\delta_{l,m})_{l,m=1}^n = I.
\\
 \sum_{j=1}^n \psi_{kj}\psi_{kj}^*& = \Bigl(n\sum_{j=1}^n a_{lk} b_{lj} \bar
b_{mj}\bar a_{mk} \Bigr)_{l,m=1}^n
 \\
 &= \Bigl(n a_{lk} \bar a_{mk}\sum_{j=1}^n  b_{lj} \bar b_{mj} \Bigr)_{l,m=1}^n
 =(\delta_{l,m})_{l,m=1}^n = I.
\end{align*}
The same relations hold obviously for $(\phi_{jk})_{j,k=1}^n$.

Now define the block martix
\[
\mathcal W = (w_{jk})_{j,k=1}^n, \quad w_{jk} = \psi_{jk}\phi_{jk}^*.
\]
We claim that $\mathcal W$ is bi-unitary. Indeed,
\begin{align*}
\sum_{i=1}^n w_{ij}^* w_{ik}& = \sum_{i=1}^n \phi_{ij}\psi_{ij}^*
\psi_{ik}\phi_{ik}^* = \delta_{jk} \sum_{i=1}^n \phi_{ij}\phi_{ij}^*
=\delta_{jk} I,
\\
\sum_{i=1}^n w_{ji}^* w_{ki}& = \sum_{i=1}^n \phi_{ji}\psi_{ji}^*
\psi_{ki}\phi_{ki}^* = \delta_{jk} \sum_{i=1}^n \phi_{ji}\phi_{ji}^*
=\delta_{jk} I.
\end{align*}

To obtain the bi-unitary matrix $\mathcal  U$ from the previous Remark,
take
\[
 A = B = C = \frac1{\sqrt{3}}
 \begin{pmatrix}
  1&1&1\\1&\epsilon&\epsilon^2\\1&\epsilon^2&\epsilon
 \end{pmatrix},
 \quad
 D= \begin{pmatrix}
  1&1&1\\1&\epsilon^2&\epsilon\\1&\epsilon&\epsilon^2
 \end{pmatrix},
\]
and orthonormal basis
\[
 e_1=\frac1{\sqrt{3}}\begin{pmatrix}1\\1\\1\end{pmatrix}, \quad
 e_2 = \frac1{\sqrt{3}}\begin{pmatrix}1\\ \epsilon \\\epsilon^2\end{pmatrix},
\quad
 e_3 = \frac1{\sqrt{3}}\begin{pmatrix}1 \\ \epsilon^2 \\ \epsilon\end{pmatrix}.
\]
where $\epsilon=e^{2\pi i/3}$. 
Then
\begin{gather*}
 \psi_{11} = \psi_{23} = \psi_{32} = \phi_{11} = \phi_{22} = \phi_{33} = e_1,
 \\
 \psi_{12} = \psi_{21} = \psi_{33} = \phi_{13} = \phi_{21} = \phi_{32} = e_2,
 \\
 \psi_{13} = \psi_{22} = \psi_{31} = \phi_{12} = \phi_{23} = \phi_{31} =e_3,
\end{gather*}
and therefore,
\begin{gather*}
u_{1,1} = e_1  e_1^* = E_{11}, \quad u_{1,2} = e_2 e_3^* = E_{23}, \quad u_{1,3} =
e_3 e_2^* = E_{32},
\\
u_{2,1} = e_2e_2^* = E_{22}, \quad u_{2,2} = e_3e_1^* = E_{31},\quad u_{2,3} =
e_1e_3^* = E_{13},
\\
u_{3,1} = e_3e_3^* = E_{33}, \quad u_{3,2} = e_1e_2^* = E_{12}, \quad u_{3,3} =
e_2e_1^* = E_{21}.
\end{gather*}
\end{remark}

\subsection{Decomposition into regular graphs}\label{dec}

Let $G_1$ and $G_2$ be simple undirected finite graphs. We assume that $G_1$
and $G_2$ have the same number of vertices $n$. We also write $V_j(G)$ for the
set of vertices of $G$ of degree (valence) $j$.

By Theorem \ref{th_alg}, the quantum graphs ${\mathcal  U}_{G_1}$ and ${\mathcal U}_{G_2}$ are qc-isomorphic if there exist a tracial $C^*$-algebra $\mathcal A$ and  a 
bi-unitary matrix $U=(u_{a,x})_{a\in V(G_2),x\in V(G_1)}$ which  satisfies  $u_{a,x}^*u_{b,y}\in\mathcal A$, $x,y\in V(G_1)$, $a,b\in V(G_2)$,  and the following orthogonality conditions
 \begin{align}
  \label{eq:orth}
  u_{a,x}u^*_{b,y}= 0, \quad \text{if $a \sim b$ in $G_2$, $x\not\sim y$ in $G_1$
or $a \not\sim b$ in $G_2$, $x\sim y$ in $G_1$},
 \end{align}
here we assume that $a\not \sim a$, $x \not \sim x$ for any  $a\in V(G_2)$, $x\in V(G_1)$.

The following statements are similar to the ones obtained in \cite{mfthesis} in the context of  quantum automorphism of graphs. 

\begin{lemma}\label{lm:orth}
 Let a bi-unitary block matrix $U$ satisfies the orthogonality
conditions~\eqref{eq:orth}. If $x\in V_j(G_1)$, $a\in V_k(G_2)$, $j\ne k$, then
$u_{a,x}=0$.
\end{lemma}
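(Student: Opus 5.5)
The plan is a double-counting argument: I will compute the element
\[
S=\sum_{b\sim a}\ \sum_{y\sim x} u_{a,x}u_{b,y}^*u_{b,y}
\]
in two ways and show it equals both $k\,u_{a,x}$ and $j\,u_{a,x}$, where $a\in V_k(G_2)$ and $x\in V_j(G_1)$. Subtracting gives $(k-j)u_{a,x}=0$, hence $u_{a,x}=0$ when $j\ne k$. The only ingredients are the orthogonality conditions~\eqref{eq:orth}, including the convention $a\not\sim a$ and $x\not\sim x$, and the diagonal parts of the bi-unitarity of $U$.

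\textbf{Step 1: the identities from bi-unitarity.} Unitarity of $U$ gives $\sum_b u_{b,y}^*u_{b,y}=1$ for each $y$, the diagonal of $U^*U=I$. Unitarity of $U^t$ gives $\sum_y u_{b,y}^*u_{b,y}=1$ for each $b$, the diagonal of $(U^t)^*U^t=I$.

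\textbf{Step 2: sum over $y$ first.} Fix $b\sim a$ and let $y$ range over all of $V(G_1)$. By \eqref{eq:orth} with $a\sim b$, the product $u_{a,x}u_{b,y}^*$ vanishes whenever $x\not\sim y$. So the inner sum over $y\sim x$ may be extended to all $y$, giving
\[
\sum_{y\sim x}u_{a,x}u_{b,y}^*u_{b,y}=u_{a,x}\sum_{y}u_{b,y}^*u_{b,y}=u_{a,x}.
\]
Summing over the $k$ neighbours $b$ of $a$ yields $S=k\,u_{a,x}$.

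\textbf{Step 3: sum over $b$ first.} Fix $y\sim x$ and let $b$ range over all of $V(G_2)$. By \eqref{eq:orth} with $x\sim y$, the product $u_{a,x}u_{b,y}^*$ vanishes whenever $a\not\sim b$; since $a\not\sim a$ by convention, this includes $b=a$. So the sum over $b\sim a$ may be extended to all $b$, giving
\[
\sum_{b\sim a}u_{a,x}u_{b,y}^*u_{b,y}=u_{a,x}\sum_b u_{b,y}^*u_{b,y}=u_{a,x}.
\]
Summing over the $j$ neighbours $y$ of $x$ yields $S=j\,u_{a,x}$.

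Comparing the two expressions gives $(k-j)u_{a,x}=0$, which proves the lemma. I expect no genuine obstacle. The one point needing care is matching each sum with the correct unitarity relation: row sums $\sum_y u_{b,y}^*u_{b,y}$ come from $U^t$, while column sums $\sum_b u_{b,y}^*u_{b,y}$ come from $U$. One must also check that the diagonal terms $b=a$ and $y=x$ are indeed killed by the non-adjacency convention.
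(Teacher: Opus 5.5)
Your proof is correct and is essentially the paper's argument. The paper also computes $u_{a,x}\sum_{y\sim x}\sum_{b\sim a}u_{b,y}^*u_{b,y}$ in two ways: the relations $\sum_b u_{b,y}^*u_{b,y}=I$ (for fixed $y\sim x$) give $j\,u_{a,x}$, and the relations $\sum_y u_{b,y}^*u_{b,y}=I$ (for fixed $b\sim a$) give $k\,u_{a,x}$. Your explicit check that the terms $b=a$ and $y=x$ are killed by the convention $a\not\sim a$, $x\not\sim x$ is a detail the paper leaves implicit.
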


\begin{proof}
 Let $n=|V(G_1)|=|V(G_2)|$. Take $x\in V_j(G_1)$, $a\in V_k(G_2)$, $j\ne k$.
 Since $U$ is unitary, for any $y\in V(G_1)$, we have
 \[
  \sum_{b\in V(G_2)} u_{b,y}^* u_{b,y} =I.
 \]
Then for any $y\sim x$ in $G_1$, from $u_{a,x}u^*_{b,y}=0$, $a\not\sim b$ in $G_2$, we get
\[
 u_{a,x} = u_{a,x}\sum_{b\in V(G_2)} u_{b,y}^* u_{b,y}
 = u_{a,x}\sum_{b\sim a} u_{b,y}^* u_{b,y}.
\]
As $x \in V_j(G_1)$, taking sum over all $y\sim x$ we obtain
\[
 j u_{a,x} = u_{a,x}\sum_{y\sim x}\sum_{b\sim a} u_{b,y}^* u_{b,y}.
\]
Similarly, since $U$ is bi-unitary, for any $b\in V(G_2)$, we have
\[
 \sum_{y\in V(G_1)}u_{b,y}^* u_{b,y} =I,
\]
and since $a \in V_k(G_2)$, we get
\[
  k u_{a,x} = u_{a,x}\sum_{y\sim x}\sum_{b\sim a} u_{b,y}^* u_{b,y},
\]
which implies $ju_{a,x} = ku_{a,x}$ and hence $u_{a,x}=0$.
\end{proof}

\begin{proposition}\label{th:degree}
 Assume that there exists a bi-unitary matrix $U$ which satisfies the  orthogonality
conditions \eqref{eq:orth}.
Then $|V_k(G_1)| = |V_k(G_2)|$, $k\in \mathbb N_{n-1}$.
\end{proposition}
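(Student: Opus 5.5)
We will use Lemma~\ref{lm:orth} to see that $U$ is block diagonal with respect to the degree partitions, and then compare traces (or dimensions) of the diagonal blocks. Fix $k$ and put $X_k=V_k(G_1)$, $A_k=V_k(G_2)$. By Lemma~\ref{lm:orth}, $u_{a,x}=0$ whenever $x\in X_k$ and $a\notin A_k$, and also whenever $a\in A_k$ and $x\notin X_k$.

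Next we compute the sum $S_k=\sum_{a\in A_k}\sum_{x\in X_k}u_{a,x}^*u_{a,x}$ in two ways. Unitarity of $U$ gives $\sum_{a\in V(G_2)}u_{a,x}^*u_{a,x}=I$ for each $x$. Since only $a\in A_k$ contribute when $x\in X_k$, summing over $x\in X_k$ yields $S_k=|X_k|\,I$. Unitarity of $U^t$ (the bi-unitary condition) gives $\sum_{x\in V(G_1)}u_{a,x}^*u_{a,x}=I$ for each $a$. Since only $x\in X_k$ contribute when $a\in A_k$, summing over $a\in A_k$ yields $S_k=|A_k|\,I$.

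Comparing the two expressions gives $|X_k|I=|A_k|I$. Hence $|V_k(G_1)|=|V_k(G_2)|$, provided the ambient algebra is nonzero, i.e. $I\neq 0$. That is the implicit standing assumption that $U$ lives in a nonzero $C^*$-algebra (a nonzero representation).

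The only delicate point is the bookkeeping in the second step. We must check that the vanishing pattern from Lemma~\ref{lm:orth} applies in both directions, row-wise and column-wise. This holds because the lemma is symmetric in the hypothesis $j\ne k$: it covers every pair with differing degrees. The case $k=0$ (isolated vertices) is handled identically if needed. The statement lists $k\in\mathbb N_{n-1}$, and the remaining degree $0$ then follows by counting, since both vertex sets have $n$ elements.
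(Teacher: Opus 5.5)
Your proof is correct and is essentially the paper's argument: Lemma~\ref{lm:orth} restricts the identities $\sum_{a}u_{a,x}^*u_{a,x}=I$ (from unitarity of $U$) and $\sum_{x}u_{a,x}^*u_{a,x}=I$ (from unitarity of $U^t$) to the degree-$k$ blocks, and $\sum_{x\in V_k(G_1)}\sum_{a\in V_k(G_2)}u_{a,x}^*u_{a,x}$ is then evaluated in two ways. Your remarks on the tacit assumption $I\neq 0$ and on the degree-$0$ case are accurate details that the paper leaves implicit.
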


\begin{proof}
 Let $x \in V_j(G_1)$. Since $U$ is unitary, applying Lemma \ref{lm:orth}, we obtain
 \[
  I= \sum_{a\in V(G_2)} u_{a,x}^* u_{a,x} =\sum_{a\in V_j(G_2)} u_{a,x}^* u_{a,x},
 \]
 which implies, in particular, that $V_j(G_2)$ is non-empty. Then
 \[
  \sum_{x\in V_j(G_1)}\sum_{a\in V_j(G_2)} u_{a,x}^* u_{a,x} =|V_j(G_1)| I.
 \]
Similarly, since $U$ is bi-unitary, we see that for $a \in V_j(G_2)$,
\[
 I= \sum_{x\in V(G_1)} u_{a,x}^* u_{a,x} =\sum_{x\in V_j(G_1)} u_{a,x}^* u_{a,x},
\]
which yields
\[
  \sum_{x\in V_j(G_1)}\sum_{a\in V_j(G_2)} u_{a,x}^* u_{a,x} =|V_j(G_2)| I.
\qedhere
\]
\end{proof}

Recall that for $S\subset V(G)$ a subgraph of $G$ induced by $S$ is defined as a graph whose set of vertices is $S$, and two vertices in $S$ are connected by edge iff they are connected by edge in $G$.

\begin{lemma}\label{lm:diag}
Let $G_1^k$ and $G_2^k$, $k\in \mathbb N_{n-1}$, $n=|V(G_1)|=|V(G_2)|$, be subgraphs of $G_1,G_2$ induced respectively by $V_k(G_1)$ and $V_k(G_2)$.
Order the vertices of $G_1$ and $G_2$ by their degree. Then a bi-unitary $U$ satisfying \eqref{eq:orth} has a block diagonal form, $U=\diag(U_1,\dots,U_{n-1})$,  $\dim U_k = |V_k(G_1)| =|V_k(G_2)|$, where $U_k$ is a bi-unitary which satisfies \eqref{eq:orth} for graphs $G_1^k$, $G_2^k$ (if $|V_k(G_1)| =|V_k(G_2)|=0$, then the corresponding block is empty).
\end{lemma}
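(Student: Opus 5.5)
Block-diagonality comes straight from Lemma~\ref{lm:orth}, and the remaining claims are inherited from $U$ one entry at a time. Order the vertices of $G_1$ and $G_2$ by degree. Lemma~\ref{lm:orth} gives $u_{a,x}=0$ whenever $x\in V_j(G_1)$, $a\in V_k(G_2)$ and $j\ne k$. Hence all entries outside the squares $V_k(G_2)\times V_k(G_1)$ vanish, and $U=\diag(U_1,\dots,U_{n-1})$ with $U_k=(u_{a,x})_{a\in V_k(G_2),\,x\in V_k(G_1)}$. By Proposition~\ref{th:degree}, $|V_k(G_1)|=|V_k(G_2)|$, so each block is square. Degree $0$ may be included as well; Lemma~\ref{lm:orth} and Proposition~\ref{th:degree} make no use of $j,k\geq1$.

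Next I check that each $U_k$ is a bi-unitary. For $x,y\in V_k(G_1)$,
\[
\delta_{x,y}I=\sum_{a\in V(G_2)}u_{a,x}^*u_{a,y}=\sum_{a\in V_k(G_2)}u_{a,x}^*u_{a,y},
\]
because the terms with $a\notin V_k(G_2)$ vanish. The same reasoning handles $\sum_a u_{a,x}u_{a,y}^*$, $\sum_x u_{a,x}^*u_{b,x}$ and $\sum_x u_{a,x}u_{b,x}^*$: each sum runs over one fixed row or column of $U$, and that row or column is supported in a single block. All four unitarity identities for $U$ and $U^t$ therefore restrict to identities for $U_k$ and $U_k^t$.

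Finally, \eqref{eq:orth} for $G_1^k$ and $G_2^k$ holds because these are induced subgraphs. For $a,b\in V_k(G_2)$ we have $a\sim b$ in $G_2^k$ if and only if $a\sim b$ in $G_2$, and likewise in $G_1$. So every condition of \eqref{eq:orth} for the pair $(G_1^k,G_2^k)$ is one of the conditions already satisfied by $U$.

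None of these steps is a real obstacle; the only points needing care are bookkeeping. The indexing should be made explicit, with rows indexed by $V(G_2)$ and columns by $V(G_1)$. The block entries $u_{a,x}^*u_{b,y}$ lie in the same algebra $\mathcal A$ as before. If $V_k$ is empty the block is empty and nothing needs to be checked.
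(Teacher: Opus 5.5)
Your proposal is correct and follows the paper's proof: block-diagonality comes from Lemma~\ref{lm:orth}, squareness of the blocks from Proposition~\ref{th:degree}, bi-unitarity of each $U_k$ from restricting the row and column sums of $U$, and \eqref{eq:orth} for $(G_1^k,G_2^k)$ from the subgraphs being induced. Your remark that degree $0$ must also be allowed is a worthwhile correction, since the statement's range $k\in\mathbb N_{n-1}$ would otherwise omit the isolated vertices.
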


\begin{proof}
The block diagonal form of $U$ follows from Lemma~\ref{lm:orth} and Proposition~\ref{th:degree}. Since $U$ is bi-unitary, each diagonal block is obviously bi-unitary. The orthogonality conditions follow since $G_1^k,G_2^k$ are induced subgraphs. 
\end{proof}

Recall that a graph is called regular if each its vertex has the same number of neighbors (degree). 

\begin{theorem}\label{th:core_graph}
 Let $U$ be a bi-unitary matrix which satisfies the orthogonality conditions
\eqref{eq:orth}.

Then there exist a finite set $\Lambda$,  a family $(G_1^\lambda)_{\lambda\in
\Lambda}$ of regular induced subgraphs of $G_1$ and a family
$(G_2^\lambda)_{\lambda \in \Lambda}$, of regular induced subgraphs of $G_2$
such that:

(i) Degrees of vertices in $G_1^\lambda$ and $G_2^\lambda$ coincide for all
$\lambda \in \Lambda$;

(ii) $|V(G_1^\lambda)| =|V(G_2^\lambda)|$, $\lambda \in \Lambda$;

(iii) after an appropriate ordering of vertices of $G_1$, $G_2$ (and the
corresponding rows and columns of $U$), the matrix $U$ takes the form
\[
 U = \diag(U_\lambda)_{\lambda \in \Lambda},
\]
where each $U_\lambda$, $\lambda \in \Lambda$ is a bi-unitary block matrix of
size $n_\lambda=|V(G_1^\lambda)| = |V(G_2^\lambda)|$ which satisfies the
orthogonality conditions imposed by the pair of graphs $G_1^\lambda$,
$G_2^\lambda$.
\end{theorem}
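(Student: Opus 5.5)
We argue by iterating Lemma~\ref{lm:diag}, with induction on the number of vertices $n$ as a convenient way to organise the iteration.

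\textbf{Base case.} If $G_1$ is regular, then by Proposition~\ref{th:degree} $G_2$ is also regular with the same degree. Indeed, $V_k(G_1)=V(G_1)$ forces $|V_k(G_2)|=n$. In this case we take $\Lambda$ to be a singleton, $G_i^\lambda=G_i$, and $U_\lambda=U$.

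\textbf{Splitting step.} Otherwise, apply Lemma~\ref{lm:diag}. After ordering the vertices by degree, $U=\diag(U_1,\dots,U_{n-1})$. Each nonempty block $U_k$ is a bi-unitary satisfying \eqref{eq:orth} for the induced subgraphs $G_1^k$ and $G_2^k$ on $V_k(G_1)$ and $V_k(G_2)$, and these have equal size. Since $G_1$ is not regular, at least two classes $V_k(G_1)$ are nonempty. Hence every nonempty block has strictly fewer than $n$ vertices, and the induction hypothesis applies to each pair $(G_1^k,G_2^k)$ with bi-unitary $U_k$. This yields a finite index set $\Lambda_k$, regular induced subgraphs $G_1^{k,\lambda}$ of $G_1^k$ and $G_2^{k,\lambda}$ of $G_2^k$ with matching degrees and sizes, and a block decomposition $U_k=\diag(U_{k,\lambda})_{\lambda\in\Lambda_k}$.

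\textbf{Assembling.} Put $\Lambda=\bigsqcup_k\Lambda_k$. Each $G_i^{k,\lambda}$ is an induced subgraph of $G_i^k$, which is itself induced in $G_i$, so it is an induced subgraph of $G_i$; an induced subgraph of an induced subgraph is induced in the whole graph on the same vertex set. Similarly, the orthogonality conditions for $U_{k,\lambda}$ imposed by $(G_1^{k,\lambda},G_2^{k,\lambda})$ are exactly those inherited from \eqref{eq:orth}, because adjacency within an induced subgraph agrees with adjacency in $G_i$. Reordering within each degree class according to the finer decomposition turns $U$ into $\diag(U_\lambda)_{\lambda\in\Lambda}$. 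Properties (i) and (ii) are inherited directly from the recursive step.

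\textbf{Main point to check.} The only subtlety is termination: each application of Lemma~\ref{lm:diag} either leaves a graph unchanged, which happens exactly when it is regular, or strictly shrinks it. The induction on $n$ handles this, with the trivial case $n=1$ being regular of degree $0$. Note that degrees are recomputed inside the induced subgraph at each stage, not in the original graph; this is why the recursion is genuinely needed.
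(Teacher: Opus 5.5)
Your proof is correct and follows the paper's argument: you repeatedly apply Lemma~\ref{lm:diag} to split $U$ along the degree classes of the induced subgraphs until every piece is regular, with matching sizes and degrees supplied by Proposition~\ref{th:degree}. The only difference is presentational: you organise the recursion as an induction on the number of vertices, which makes explicit the termination that the paper's ``do it recursively'' leaves implicit.
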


\begin{proof}
Order the vertices of $G_1,G_2$ by their degree (at this step, the order of vertices of the same degree can be taken arbitrarily). Then by Lemma~\ref{lm:diag}, construct induced subgraphs $G_1^{k_1}, G_2^{k_1}$, $k_1\in\mathbb N_{n-1}$, and decompose $U$ into a direct sum $U=\diag(U_{k_1})_{k_1 = 1}^{n-1}$ of bi-unitary matrices corresponding to pairs $G_1^{k_1},G_2^{k_1}$.
If for some $k_1$ the graph $G_1^{k_1}$ is not regular (and therefore, by Proposition~\ref{th:degree} $G_2^{k_1}$ is not regular, too), repeat the same procedure: order vertices inside   $G_1^{k_1}$ and $G_2^{k_1}$ by their degree in these subgraphs, consider induced subgraphs $G_1^{k_1k_2}, G_2^{k_1k_2}$, $k_2\in\mathbb N_{n_1-1}$, $n_1 = |V_{k_1}(G_1)|=|V_{k_1}(G_2)|$, and further decompose $U_{k_1}= \diag(U_{k_1,1},\dots,U_{k_1,n_1-1})$; do it recursively for all non-regular subgraphs. As a result, we obtain a family of regular induced subgraphs $G_1^\lambda$, $G_2^\lambda$, where $\lambda$ belongs to some set $\Lambda$ of multiindices of varied length, and a decomposition $U = \diag(U_\lambda)_{\lambda \in \Lambda}$ such that (i), (iii) holds by the construction, and (ii) follows from  Proposition~\ref{th:degree}.
 \end{proof}

\begin{corollary} Let $G_1$ and $G_2$ be simple graphs such that $\mathcal U_{G_1}\simeq_{\rm t} \mathcal U_{G_2}$, ${\rm t}\in\{\rm loc, q, qc\}$. Then for each $\lambda\in \Lambda$ and the regular induced subgraphs $G_1^\lambda$ and  $G_2^\lambda$, we have ${\mathcal U}_{G_1^\lambda}\simeq_{\rm t} {\mathcal U}_{G_2^\lambda}$. 
\end{corollary}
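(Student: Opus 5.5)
We would prove the corollary by passing to the game algebra via Theorem~\ref{th_alg} and then cutting the bi-unitary down to a single block of the decomposition in Theorem~\ref{th:core_graph}. Suppose $\mathcal U_{G_1}\simeq_{\rm t}\mathcal U_{G_2}$. By Theorem~\ref{th_alg}, there is a trace $\tau$ on $\mathcal A(\text{Iso}(\mathcal U_{G_1},\mathcal U_{G_2}))$; for ${\rm t}={\rm q}$ (resp. ${\rm loc}$) it factors through a finite-dimensional (resp. abelian) $*$-representation. Realising $\tau$ through its GNS representation, or through the given finite-dimensional/abelian representation $\pi$, we obtain a bi-unitary $U=(u_{a,x})$ over some $C^*$-algebra. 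This $U$ satisfies \eqref{eq:orth}, and the products $u_{a,x}^*u_{b,y}$ generate a tracial $C^*$-algebra $\mathcal B$ of the required type: arbitrary with a faithful trace, finite-dimensional, or abelian.

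Next we apply Theorem~\ref{th:core_graph} to $U$. After reordering, $U=\diag(U_\lambda)_{\lambda\in\Lambda}$, and each $U_\lambda=(u_{a,x})_{a\in V(G_2^\lambda),x\in V(G_1^\lambda)}$ is a bi-unitary satisfying the orthogonality conditions for the pair $G_1^\lambda,G_2^\lambda$. Since these are induced subgraphs, adjacency in $G_i^\lambda$ coincides with adjacency in $G_i$.

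By the universal property of $\mathcal A(\text{Iso}(\mathcal U_{G_1^\lambda},\mathcal U_{G_2^\lambda}))$, there is then a $*$-homomorphism
$$\sigma_\lambda:\mathcal A(\text{Iso}(\mathcal U_{G_1^\lambda},\mathcal U_{G_2^\lambda}))\to\mathcal B$$
sending each generator $u_{a,x}^*u_{b,y}$ to the corresponding element $u_{a,x}^*u_{b,y}$ of $\mathcal B$. One point needs care: the universal algebra is generated by words in the entries, whereas $\sigma_\lambda$ is defined only on the subalgebra generated by words. This is handled by taking the universal algebra in the entries $u_{a,x}$ of $U_\lambda$ and restricting. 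The composition $\tau\circ\sigma_\lambda$ (or $\tau_{\mathcal A}\circ\pi\circ\sigma_\lambda$) is then a tracial state. It factors through the same finite-dimensional or abelian algebra whenever $\tau$ does, because composing with $\sigma_\lambda$ only enlarges the factorisation chain. Applying Theorem~\ref{th_alg} in the reverse direction gives a perfect ${\rm t}$-strategy, so ${\mathcal U}_{G_1^\lambda}\simeq_{\rm t}{\mathcal U}_{G_2^\lambda}$.

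The step we expect to be the main obstacle is justifying that $U$ really lives in a concrete $C^*$-algebra with the trace defined there. The algebra $\mathcal A(\text{Iso})$ is defined only as the subalgebra generated by words, so the entries $u_{a,x}$ themselves lie in the ambient universal algebra $\mathcal C$ of the bi-unitary, not in $\mathcal A(\text{Iso})$. We would resolve this by working inside $\mathcal C$: Theorem~\ref{th:core_graph} uses only the bi-unitarity of $U$ and \eqref{eq:orth}, both of which hold there. The trace needs to be evaluated only on $\sigma_\lambda$ of words, and these are words in $\mathcal A(\text{Iso}(\mathcal U_{G_1},\mathcal U_{G_2}))$, so no extension of $\tau$ to $\mathcal C$ is required. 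For the local case, we can alternatively note that ${\mathcal U}_{G_1}\simeq_{\rm loc}{\mathcal U}_{G_2}$ means $G_1\cong G_2$ by \cite[Proposition 7.8]{bhtt}. The classical isomorphism then preserves the iterated degree partition, so it restricts to isomorphisms $G_1^\lambda\cong G_2^\lambda$, provided the orderings within equal degrees are chosen compatibly.
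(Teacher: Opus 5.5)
Your proposal is correct and is essentially the argument the paper leaves implicit: the corollary is stated without proof as an immediate consequence of Theorem~\ref{th:core_graph} and Theorem~\ref{th_alg}, since each block $U_\lambda$ is a bi-unitary satisfying \eqref{eq:orth} for the pair $G_1^\lambda,G_2^\lambda$, so the trace transfers to the smaller game algebra with the same factorisation type. Keep the version from your last paragraph and drop the GNS realisation in your first, since that realisation only represents the words and not the entries $u_{a,x}$: apply Theorem~\ref{th:core_graph} to the universal bi-unitary in the entry algebra, restrict the induced map to a unital $*$-homomorphism $\mathcal A(\text{Iso}(\mathcal U_{G_1^\lambda},\mathcal U_{G_2^\lambda}))\to\mathcal A(\text{Iso}(\mathcal U_{G_1},\mathcal U_{G_2}))$, and compose with $\tau$ (or with $\pi$ and $\tau_{\mathcal A}$).
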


\subsection{Nonlocal symmetries for higher order graphs.} \label{nonloc}

In this section we show that all quantum graphs ${\mathcal U}_G$  admit nonlocal symmetries if and only if $|V(G)|\geq 3$. This stands in contrast to the situation for nonlocal symmetries of the underlying classical graphs, where the theory is considerably richer, see \cite{RobSch21}. In particular, it was proved in \cite{RobSch21} that the only graphs on five vertices that does not admit nonlocal symmetries are $K_5$ and its complement $\bar{K}_5$.

We first define the class of synchronous correlations and recall a characterization which will be used in the proof of the main theorem.

Let $\mathbb F(n,k)$ be the group that is the free product of $n$ copies of the
cyclic group of order $k$ and let $C^*(\mathbb F(n,k))$ be the $C^*$-algebra of
$\mathbb F(n,k)$. It is generated by $n$ unitaries $u_1,\ldots, u_n$, each of
order $k$, i.e. $u_i^k=1$, $i\in\mathbb N_n$. Decomposing $u_i$ in terms of
spectral projections $u_i=\sum_{a=0}^{k-1} \omega^ae_{i,a}$, where
$\omega=e^{2\pi i/k}$, we obtain a system of PVMs $\{e_{i,a}: a=0,\ldots,
k-1\}$, i.e. projections $e_{i,a}$ such that $\sum_{a=0}^{k-1} e_{i,a}=1$ for all $i$, which is also a universal
family of $n$ $k$-PVM's, meaning that for any family of PVMs $\{E_{x,a}:
a=0,\ldots,k-1\}\subset \mathcal B(H)$, $x\in\mathbb N_n$, there exists a
$*$-homomorphism of $C^*(\mathbb F(n,k))\to \mathcal B(H)$ such that
$e_{x,a}\mapsto E_{x,a}$.

Let $X=Y$, $A=B$ be finite sets. Recall that a correlation $\{p(a,b|x,y)\}_{x,y\in X, a,b\in
A}$ is synchronous if $p(a,b|x,x)=0$ whenever $a=b$. We write $\mathcal C_{\rm t}^s(n,k)$ for the t-class of synchronous correlations with $|X|=|Y|=n$, $|A|=|B|=k$, where ${\rm t}\in\{{\rm loc, q, qc}\}$.

The following characterization of synchronous correlations in terms of traces is
from \cite{psstw}.
\begin{theorem}\label{synch}
We have $p\in \mathcal C_{\rm qc}^s(n,k)$ if and only if there is a family of
$n$ $k$-PVM's $\{E_{x,a}: 1\leq x\leq n, 0\leq a\leq k-1\}$ in a $C^*$-algebra $\mathcal A$
with a trace $\tau$ such that $p(a,b|x,y)=\tau(E_{x,a}E_{y,b}).$
Moreover,
\begin{itemize}
    \item $p\in \mathcal C_{\rm loc}^s(n,k)$ if and only if $\mathcal A$ can be
taken to be abelian,
    \item $p\in \mathcal C_{\rm q}^s(n,k)$ if and only if $\mathcal A$ can be
taken to be finite-dimensional.
\end{itemize}
\end{theorem}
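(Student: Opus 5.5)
The plan is to prove the qc-characterization first by passing between commuting-operator strategies and tracial GNS representations, and then to specialize to the abelian and finite-dimensional cases. Throughout, synchronous is read as $p(a,b|x,x)=0$ for $a\neq b$, which is the intended meaning.

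\emph{From a trace to a correlation.} Let $\tau$ be a trace on $\mathcal A$ containing PVMs $E_{x,a}$. Take the GNS triple $(\pi_\tau,H_\tau,\xi_\tau)$. Because $\tau$ is tracial, right multiplication $\hat X\mapsto \widehat{XY}$ extends to a bounded operator $\rho(Y)$, and $\rho(\mathcal A)$ commutes with $\pi_\tau(\mathcal A)$. Set $F_{y,b}=\rho(E_{y,b})$. These are commuting PVMs on Bob's side. For $p(a,b|x,y):=\langle \pi_\tau(E_{x,a})F_{y,b}\xi_\tau,\xi_\tau\rangle$ we get $p(a,b|x,y)=\tau(E_{x,a}E_{y,b})$, which lies in $\mathcal C_{\rm qc}$. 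Since $E_{x,a}E_{x,b}=0$ for $a\ne b$, this $p$ is synchronous.

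\emph{From a synchronous qc correlation to a trace.} Let $p(a,b|x,y)=\langle E_{x,a}F_{y,b}\xi,\xi\rangle$ with commuting POVMs. Synchronicity gives $\sum_a\langle E_{x,a}F_{x,a}\xi,\xi\rangle=1$. Expanding
\[
\sum_a\|(E_{x,a}-F_{x,a})\xi\|^2 \le \sum_a\langle (E_{x,a}+F_{x,a})\xi,\xi\rangle-2\sum_a{\rm Re}\langle E_{x,a}F_{x,a}\xi,\xi\rangle=0,
\]
where the first step uses $E^2\le E$, yields $E_{x,a}\xi=F_{x,a}\xi$ for all $x,a$.

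Let $\mathcal A$ be the $C^*$-algebra generated by the $E_{x,a}$, and set $s(X)=\langle X\xi,\xi\rangle$. For any word $X=E_{x_1,a_1}\cdots E_{x_m,a_m}$, we can move the letters one by one to Bob's side using commutation and $E\xi=F\xi$:
\[
X\xi = F_{x_m,a_m}\cdots F_{x_1,a_1}\xi.
\]
It follows that $s(XE_{y,b})=s(E_{y,b}X)$, and by induction and density $s$ is tracial on $\mathcal A$. Moreover $s(E_{x,a}-E_{x,a}^2)=\sum_{b\ne a}s(E_{x,a}E_{x,b})=\sum_{b\neq a}p(a,b|x,x)=0$. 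Since $s$ is faithful on $\pi_s(\mathcal A)$, the images $\pi_s(E_{x,a})$ are projections, and hence form PVMs. The induced trace $\tau$ on $\pi_s(\mathcal A)$ satisfies $p(a,b|x,y)=\langle E_{x,a}E_{y,b}\xi,\xi\rangle=\tau(E_{x,a}E_{y,b})$.

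\emph{Local case.} If $\mathcal A$ is abelian, $\tau$ is integration against a probability measure on the characters of $\mathcal A$. Each character sends every PVM $\{E_{x,a}\}_a$ to a point mass, giving a deterministic strategy with $\varphi_A=\varphi_B$, so $p$ is local. Conversely, a synchronous local correlation is a convex combination of deterministic ones. Each deterministic component with positive weight must itself be synchronous, which forces Alice and Bob to use the same function $f$. Taking $\mathcal A=\mathbb C^{N}$ with the corresponding weights gives the required abelian representation.

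\emph{Quantum case (the main obstacle).} Given a trace on finite-dimensional $\mathcal A\subset M_d$, I would use the maximally entangled vector on $\mathbb C^d\otimes\mathbb C^d$ (or a direct sum of such vectors weighted to reproduce $\tau$ on each block). Alice uses $E_{x,a}$ and Bob uses $E_{y,b}^{T}$, so that $\langle (E\otimes F^T)\phi,\phi\rangle=\frac1d{\rm tr}(EF)$. Conversely, for a synchronous correlation realized on $H_A\otimes H_B$, I would restrict to the support of the reduced density of $\xi$ so that $\xi$ has full Schmidt rank. Then $\pi_s(\mathcal A)$ is a quotient of a subalgebra of $B(H_A)$, and is therefore finite-dimensional. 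The delicate point is checking that this restriction preserves the POVM structure and the equalities $E\xi=F\xi$; here I would follow the argument of \cite{psstw}.
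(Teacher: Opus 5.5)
The paper gives no proof of its own here and simply quotes \cite{psstw}; your argument is correct and is essentially the standard one from that source (synchronicity forces $E_{x,a}\xi=F_{x,a}\xi$, which makes the vector state tracial on the algebra generated by Alice's operators, and conversely the GNS representation with right multiplication supplies Bob's commuting PVMs), with your $E^2\le E$ estimate and the faithfulness of the induced trace correctly adapting it to the POVM-based definition used in this paper. The ``delicate point'' you flag in the quantum case does not actually arise: applying your qc argument directly to $E_{x,a}\otimes I$ and $I\otimes F_{y,b}$ already gives a trace on $C^*(E_{x,a}\otimes I)\subseteq B(H_A)\otimes I$, which is finite-dimensional, so its GNS quotient is finite-dimensional and there is no need to restrict to the support of $\xi$.
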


An example showing a separation between local $\mathcal C_{\rm
loc}^s(3,2)$ and  quantum synchronous correlations $\mathcal C_{\rm q}^s(3,2)$ is
given in \cite[Example 4.4]{MPTW}. In particular, together with Theorem
\ref{synch}, it provides a trace $\tilde\tau$ on $C^*(\mathbb F(3,2))$ such that
there is no representation
$\pi$ of $C^*(\mathbb F(3,2))$ into an abelian $C^*$-algebra $\mathcal A$ and a
linear functional $f\in \mathcal A^*$ such that
$\tilde\tau(e_{x,a}e_{y,b})=f(\pi(e_{x,a}e_{y,b}))$ for all $1\leq x,y\leq 3$,
$a,b\in\{0,1\}$. Write $v_x$, $1\leq x\leq 3$,
for the generators of $C^*(\mathbb F(3,2))$ and let $\mathcal R=\text{span}\{1,v_x,
v_xv_y, 1\leq x,y\leq 3 \}$. As $p_{x,a}=\frac{1}{2}(1+(-1)^av_x)$, we have that
the restriction $\tilde\tau|_{\mathcal R}$ is not equal to the restriction
$f\circ \pi|_{\mathcal R}$ for any $\pi$ and $f$ as above.

We are now ready to prove the main theorem of this section. 

\begin{theorem}
Let $G$ be a graph with $|V(G)|\ge3$. Then $\mathcal U_G$ admits a nonlocal
symmetry.
\end{theorem}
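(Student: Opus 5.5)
The plan is to reduce to the synchronous separation of \cite[Example 4.4]{MPTW}, transported into $C(\mathrm{Qut}(\mathcal U_G))$ through a very degenerate family of representations: \emph{diagonal} bi-unitaries. Let $X=V(G)$. If $U=\diag(w_x)_{x\in X}$ with each $w_x$ a unitary, then $U$ is a bi-unitary. Moreover, the orthogonality conditions \eqref{eq:orth} with $G_1=G_2=G$ are automatic. Indeed, $u_{a,x}u_{b,y}^*$ can be non-zero only if $a=x$ and $b=y$, and then ${\rm rel}(a,b)={\rm rel}(x,y)$ trivially. Hence every family of unitaries $(w_x)_{x\in X}$ in a $C^*$-algebra $\mathcal B$ gives a $*$-homomorphism $\pi_w\colon C(\mathrm{Qut}(\mathcal U_G))\to\mathcal B$ with $\pi_w(u_{a,x}^*u_{b,y})=\delta_{a,x}\delta_{b,y}w_x^*w_y$.

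\textbf{Step 1: construct the trace.} Fix three distinct vertices $x_1,x_2,x_3$, which exist since $|V(G)|\ge 3$. Let $v_1,v_2,v_3$ be the canonical self-adjoint unitary generators of $C^*(\mathbb F(3,2))$. Put $w_{x_i}=v_i$ and $w_x=1$ for all other $x$. Let $\tau=\tilde\tau\circ\pi_w$, where $\tilde\tau$ is the trace from \cite[Example 4.4]{MPTW} that witnesses $\tilde\tau|_{\mathcal R}\neq f\circ\pi|_{\mathcal R}$ for every abelian $\pi$ and every $f$. By Theorem~\ref{th_alg}, $\tau$ defines a perfect qc-strategy $\Gamma$ for $\text{Iso}(\mathcal U_G,\mathcal U_G)$. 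Its entries are
\[
\Gamma(\epsilon_{x,x'}\otimes\epsilon_{y,y'})_{a,a',b,b'}=\tau(u_{a,x}^*u_{a',x'}u_{b',y'}^*u_{b,y}).
\]
In particular, taking $a=x$, $a'=x'=y'=b'$ and $b=y$ gives $\tilde\tau(v_xv_{x'}v_{x'}v_y)=\tilde\tau(v_xv_y)$. Taking $x'=y'$ with both outside $\{x_i\}$ gives the same value. So all the numbers $\tilde\tau(1)$, $\tilde\tau(v_x)$ (choose $y$ outside $\{x_i\}$) and $\tilde\tau(v_xv_y)$ appear among the entries of $\Gamma$.

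\textbf{Step 2: suppose $\Gamma$ is local and reach a contradiction.} By Theorem~\ref{th_alg}, locality would give an abelian $*$-representation $\pi\colon C(\mathrm{Qut}(\mathcal U_G))\to\mathcal A$ and a state $f$ on $\mathcal A$ reproducing all degree-four words appearing in $\Gamma$. The goal is to manufacture from $\pi$ an abelian representation $\rho$ of $C^*(\mathbb F(3,2))$, together with a functional $g$, such that $g(\rho(v_xv_y))=\tilde\tau(v_xv_y)$ and $g(\rho(v_x))=\tilde\tau(v_x)$. This contradicts the choice of $\tilde\tau$. The natural candidates are built from the abelian images $\pi(u_{x,x}^*u_{y,y})$. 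Fix a vertex $z\notin\{x_1,x_2,x_3\}$ if one exists, or otherwise use $x_1$ as the base point. Then define $\rho(v_i)$ as a self-adjoint unitary obtained from $\pi(u_{z,z}^*u_{x_i,x_i})$, for example via its real part or via a spectral projection, after cutting down by the projection $\pi(u_{z,z}^*u_{z,z})\pi(u_{x_i,x_i}^*u_{x_i,x_i})$.

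\textbf{Main obstacle.} The difficulty is that in the universal algebra the elements $u_{x,x}^*u_{y,y}$ need not be unitary. Nor need $u_{y,y}u_{y,y}^*$ be $1$, so in an arbitrary abelian representation the products $z_{xy}=\pi(u_{x,x}^*u_{y,y})$ need not satisfy $z_{xy}z_{yz}=z_{xz}$. One must therefore show that the relevant entries of $\Gamma$ force the abelian model to live, up to values visible to $f$, on the corner where these words behave like the diagonal model. I would first exploit perfectness. Entries such as $\tau(u_{a,x}^*u_{a,x})=\delta_{a,x}$ give $f(\pi(u_{a,x}^*u_{a,x}))=\delta_{a,x}$, so $f$ is supported where $\pi(u_{x,x}^*u_{x,x})=1$ for all $x$ and all off-diagonal $\pi(u_{a,x})$ vanish. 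Passing to that support via a GNS/compression argument, $\pi$ restricted to the support becomes a diagonal bi-unitary with unitary entries. There the $z_{xy}$ form a multiplicative cocycle, and $g(\rho(\cdot))$ matches $\tilde\tau$ on $\mathcal R$.

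If this compression step fails, the fallback is a sharper witness in the style of the proof of Proposition~\ref{UKn}. That means finding a degree-four word that vanishes under every abelian representation but has non-zero $\tau$, possibly after replacing the diagonal construction on $\{x_1,x_2,x_3\}$ by a $3\times 3$ block of the permutation type used in the remark after Proposition~\ref{UKn}.
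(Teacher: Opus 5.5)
\textbf{The case $|V(G)|=3$ is a genuine gap.} Your argument does not cover this case, and the diagonal construction cannot cover it.

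With three vertices there is no $z\notin\{x_1,x_2,x_3\}$. So $\pi_w$ sends every generator $u_{a,x}^*u_{b,y}$ to $0$, $1$ or some $v_iv_j$. Every entry of $\Gamma$ is then zero or $\tilde\tau$ of an even-length word in the $v_i$. The values $\tilde\tau(v_i)$ never appear, so the non-locality of $\tilde\tau|_{\mathcal R}$ cannot be invoked.

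Worse, the construction then genuinely produces local strategies. Take $\tilde\tau=\mathrm{tr}\circ\pi_{\rm MB}$, with $\pi_{\rm MB}$ the representation \eqref{eq:mercedes-benz}. This is a valid witness for $\mathcal C^s_{\rm loc}(3,2)\ne\mathcal C^s_{\rm q}(3,2)$: $\tilde\tau(v_iv_j)=-\tfrac12$ for $i\ne j$, and no $\pm1$-valued model satisfies this, by parity. But each $\pi_{\rm MB}(v_iv_j)$ is a product of two reflections, i.e.\ a rotation of the plane, and rotations commute. Hence $\mathrm{tr}\circ\pi_{\rm MB}\circ\pi_w$ factors through a commutative algebra, and $\Gamma$ is local by Theorem~\ref{th_alg}.

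Using $x_1$ as base point does not help. On the support $\pi(u_{x_1,x_1}^*u_{x_1,x_1})=1$, and the other elements $\pi(u_{x_1,x_1}^*u_{x_j,x_j})$ only encode the rotations $v_1v_j$. The obstruction is structural: a diagonal bi-unitary carries one unitary per vertex, and only the quotients $w_x^*w_y$ are visible. Exhibiting three independent reflections therefore needs a base vertex plus three more vertices.

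Your fallback does not close the gap either:
\begin{itemize}
\item The vanishing argument in Proposition~\ref{UKn} rests on $p_{1,1}p_{3,1}=0$, which is a $K_3$-specific relation.
\item For $K_1\cup K_2$ and the path on three vertices, Lemma~\ref{lm:orth} kills every $u_{a,x}$ with $\deg a\ne\deg x$. All nine entries of the permutation block are non-zero, so that block is not even a representation.
\item For $K_1\cup K_1\cup K_1$ there are no relations at all, and the witness word is non-zero at, e.g., the scalar $3\times3$ Fourier matrix, which is an abelian representation.
\end{itemize}
This is why the paper treats the four graphs on three vertices one by one, using non-diagonal bi-unitaries:
\begin{itemize}
\item for $K_3$, the permutation-type representation;
\item for $K_1\cup K_2$, a two-dimensional representation in which $u_{2,2}^*u_{2,2}$ and $u_{3,2}^*u_{3,2}$ are complementary projections, so $u_{2,2}^*u_{1,1}u_{3,2}^*u_{1,1}$ vanishes in every abelian representation but has trace $\tfrac12$;
\item for the path and $K_1\cup K_1\cup K_1$, a unitary $u_{1,1}$ next to a $2\times2$ block built from the three Mercedes--Benz reflections, so that $u_{1,1}^*u_{2,2}$, $u_{1,1}^*u_{2,3}$, $u_{1,1}^*u_{3,3}$ are proportional to three reflections over the single base $u_{1,1}$.
\end{itemize}

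\textbf{The case $|V(G)|\ge4$.} Here your plan is essentially the paper's: a diagonal bi-unitary carrying the reflections, restriction of the abelian model to the support of the state, and an abelian representation of $C^*(\mathbb F(3,2))$ as the contradiction. You use a star $w_z^*w_{x_i}=v_i$ where the paper uses a chain $v_1^*v_2,\,v_2^*v_3,\,v_3^*v_4$.

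One step needs repair. Do not make $\rho(v_i)$ self-adjoint through a real part or a spectral projection, since either changes the values of $f$. Instead, let $\zeta_i=\pi(u_{z,z}^*u_{x_i,x_i})$, which is unitary on the support as in the paper. The word $u_{z,z}^*u_{x_i,x_i}u_{z,z}^*u_{x_i,x_i}$ is an entry of $\Gamma$ with value $\tilde\tau(v_i^2)=1$. Hence $f(|\zeta_i-\zeta_i^*|^2)=2-2\,\mathrm{Re}\,f(\zeta_i^2)=0$, so $\zeta_i$ is already a self-adjoint unitary on the support. It then follows directly that $f(\zeta_i\zeta_j)=\tau(u_{x_i,x_i}^*u_{z,z}u_{z,z}^*u_{x_j,x_j})=\tilde\tau(v_iv_j)$, with no cocycle argument needed.

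Also, $\pi(u_{a,x})$ is not defined, because only the products $u_{a,x}^*u_{b,y}$ lie in $C(\mathrm{Qut}(\mathcal U_G))$. What vanishes on the support is $\pi(u_{a,x}^*u_{b,y})$ whenever $a\ne x$ or $b\ne y$.
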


\begin{proof}
Assume first that $|V(G)|=3$. 
In Proposition~\ref{UKn} we have already proved the statement for the
full graph $K_3$. The three remaining graphs are $\bullet - \bullet - \bullet$,
$\bullet\ \ \ {\bullet - \bullet} = K_1\cup K_2$,  and $\bullet \ \
\bullet\ \ \bullet = K_1\cup K_1\cup K_1$.
As in the case of $K_3$, we prove that for each  such graph $G$, there exists a trace $\tau_G$ on $C(\mathrm{Qut}(\mathcal U_G))$, which cannot be factored through a commutative $C^*$-algebra $\mathcal
A$ in such a way that
\[
 \tau(x) = \tau_{\mathcal A}(\pi(x)),
\]
where $\tau_{\mathcal A}\in \mathcal A^*$,
$\pi\colon C(\mathrm{Qut}(\mathcal U_G)) \to
\mathcal A$ is a homomorphism, and
\[
 x \in\mathcal R = \text{span} \{u_{a,x}^*u_{b,y}u_{a',x'}^*u_{b,'y'},
a,b,a',b',x,y,x',y' =1,2,3\}.
\]

Consider the graph $G = {\stackrel{1}{\bullet}\ \ \ \stackrel{2}{\bullet}\! -
\stackrel{3}{\bullet}} = K_1\cup K_2$.
Let $U=(u_{a,x})_{a,x\in V(G)}$ be the  bi-unitary  such that
$u_{a,x}^*u_{b,y}$, $x,y,a,b\in V(G)$ generate  $C(\mathrm{Qut}(\mathcal
U_{G}))$.  By Lemma~\ref{lm:orth}, $u_{1,1}$ is unitary,
$u_{1,2}=u_{2,1}=u_{1,3}=u_{3,1}=0$ and $(u_{a,x})_{a,x=2}^3$ is a bi-unitary
such that $u_{a,x}^*u_{b,y}$, $x,y,a,b=2,3$, generate a subalgebra isomorphic to
$C(\mathrm{Qut}(\mathcal
U_{K_2}))$. By Theorem~\ref{prop:aiso} and the proof of
Proposition~\ref{graph_K2}, $u_{a,x}$, $a,x=2,3$, are partial isometries, so
$u_{2,2}^*u_{2,2}=u_{3,3}^*u_{3,3}:=p$, $u_{2,3}^*u_{2,3}=u_{3,2}^*u_{3,2}=1-p$
are projections. Therefore, if $\pi\colon C(\mathrm{Qut}(\mathcal
U_{G}))\to \mathcal A$ is a $*$-homomorphism to a commutative
$C^*$-algebra $\mathcal A$, then
\begin{align*}
\pi(u_{2,2}^*u_{1,1}u_{3,2}^*u_{1,1})&
=\pi(u_{2,2}^*u_{2,2})\pi(u_{2,2}^*u_{1,1} )
\pi(u_{3,2}^*u_{3,2})\pi(u_{3,2}^*u_{1,1})
\\
&=\pi(p)\pi(u_{2,2}^*u_{1,1} )
\pi(1-p)\pi(u_{3,2}^*u_{1,1})
\\
&=\pi(u_{2,2}^*u_{1,1} )\pi(p)
\pi(1-p)\pi(u_{3,2}^*u_{1,1})=0.
\end{align*}
On the other hand, for the bi-unitary $U=(u_{a,x})_{a,x=1}^3$ given by
\begin{gather*}
	u_{1,1} = \begin{pmatrix}
		0&1\\1&0
	\end{pmatrix},
	\quad u_{1,2} = u_{1,3} = u_{2,1} = u_{3,1} =0,
	\\
	u_{2,2} = u_{3,3} = \begin{pmatrix}
		1&0\\0&0
	\end{pmatrix},
	\quad
	u_{2,3} = u_{3,2} = \begin{pmatrix}
		0&0\\0&1
	\end{pmatrix},
\end{gather*}
which can be easily verified to determine a two-dimensional representation of $C(\mathrm{Qut}(\mathcal
U_{G}))$, we have that 
\[
\text{tr}( u_{2,2}^*u_{1,1}u_{3,2}^*u_{1,1}) = \text{tr}(p) = \frac12,
\]
which proves the existence of a nonlocal symmetry for this graph.

For the bi-unitary matrix  $U=(u_{a,x})_{a,x\in V(G)}$ related to the graph
$G=\ \stackrel{2}{\bullet}-\stackrel{1}{\bullet}-\stackrel{3}{\bullet}$ we have
by
Lemma~\ref{lm:orth}, as above, that $u_{11}$ is unitary,
$u_{1,2}=u_{2,1}=u_{1,3}=u_{3,1}=0$, and $u_{ax}$, $a,x\in \{2,3\}$ form a
bi-unitary matrix, but unlike above, there are no non-trivial orthogonality
conditions.
Consider $C^*$-algebra $C^*(\mathbb
F(3,2))/\mathcal J$, where $\mathcal J$ is a $C^*$-ideal generated by
$u_1+u_2+u_3$, where  $u_i$, $i=1,2,3$, are the canonical selfadjoint unitary
generators of
$C^*(\mathbb F(3,2))$. This algebra has a unique, up to unitary equivalence,
irreducible representation $\pi$,
\begin{equation} \label{eq:mercedes-benz}
 \pi(u_1) =
 \begin{pmatrix}
1&0\\0&-1
 \end{pmatrix}, \quad
 \pi(u_2) =
 \begin{pmatrix}
-1/2&\sqrt{3}/2 \\\sqrt{3}/2&1/2
 \end{pmatrix}, \quad
 \pi(u_3) =
 \begin{pmatrix}
-1/2&-\sqrt{3}/2 \\-\sqrt{3}/2&1/2
 \end{pmatrix},
\end{equation}
and thus is simple. Using directly verified relation
$u_1u_2=u_2u_3$,
it is a routine verification that
\[
 \mathcal U = (\pi(u_{a,x}))_{a,x=1}^3 =
 \frac1{\sqrt{2}}
 \begin{pmatrix}
  \sqrt{2}I&0&\\
  0&\pi(u_1)&\pi(u_2)\\
  0&\pi(u_2)&-\pi(u_3)
 \end{pmatrix}
\]
is a bi-unitary matrix.

For the corresponding representation $\pi$ of $C(\mathrm {Qut}(\mathcal U_G))$,
we have, in particular,
\begin{align*}
 \pi(u_{1,1}^*u_{2,2}) = \pi(u_1),
 \quad
 \pi(u_{1,1}^*u_{2,3}) = \pi(u_2),
 \quad
\pi(u_{1,1}^*u_{3,3})=  -\pi(u_3).
\end{align*}
By Theorem~\ref{th_alg}, the trace $\tau=\mathrm{tr}\circ\pi$ gives rise to a
perfect quantum strategy on $C(\mathrm {Qut}(\mathcal U_G))$ by
\[
\Gamma(\epsilon_{x,x'} \otimes \epsilon_{y,y'})
= \left(\tau(u_{a,x}^*u_{a',x'}u_{b',y'}^*u_{b,y})\right)_{a,a',b,b'},\quad
x,x',y,y',a,a',b,b'\in \mathbb N_3.
\]

Assuming this strategy being local, there exists a commutative $C^*$-algebra
$\mathcal A$, a state $\gamma \in \mathcal A^*$, and a homomorphism $\rho\colon
C(\mathrm {Qut}(\mathcal U_G)) \to \mathcal A$,
such that for each
\[
 s\in \mathcal S = \mathrm{span}\{u_{a,x}^*u_{a',x'}u_{b,y}^*u_{b',y'}\mid
x,x',y,y',a,a',b,b' \in \mathbb N_n\}
\]
we have $\tau(s) = \gamma(\rho(s))$. Here we may assume that $\gamma$ is
faithful.
Then we have
\begin{align*}
&\gamma(\rho((u_{1,1}^*u_{2,2}-u_{2,2}^*u_{1,1})^*(u_{1,1}^*u_{2,2}-u_{2,2}^*u_{
1 ,1}))
\\
&\qquad=
\mathrm{tr}\,\pi((u_{1,1}^*u_{2,2}-u_{2,2}^*u_{1,1})^*(u_{1,1}^*u_{2,2}-u_{2,2}^
* u_ {1,1}))
\\
&\qquad = \mathrm {tr} (\pi(u_1)-\pi(u_1^*))^*(\pi(u_1)-\pi(u_1^*))=0,
\end{align*}
so that $\rho(u_{1,1}^*u_{2,2})$ is self-adjoint. Here we used the fact that
\[
(u_{1,1}^*u_{2,2}-u_{2,2}^*u_{1,1})^*(u_{1,1}^*u_{2,2}-u_{2,2}^*u_{
1 ,1}) \in \mathcal S.
\]
Notice that $ 1=(u_{1,1}^*u_{1,1})^2\in  \mathcal S$,
then, as $\|u_{1,1}^*u_{2,2}\|\le 1$, we have $0\le 1-
u_{1,1}^*u_{2,2}u_{2,2}^* u_{1,1}\in \mathcal S$, and
\begin{align*}
 \gamma(\rho(1-
u_{1,1}^*u_{2,2}u_{2,2}^* u_{1,1}))& = \mathrm{tr}\, \pi(1-
u_{1,1}^*u_{2,2}u_{2,2}^* u_{1,1})) =\mathrm{tr}\, \pi(1 - u_1u_1)=0,
\\
\gamma(\rho(1-
u_{2,2}^*u_{1,1}u_{1,1}^* u_{2,2}))& = \mathrm{tr}\, \pi(1-
u_{2,2}^*u_{1,1}u_{1,1}^* u_{2,2})) =\mathrm{tr}\, \pi(1 - u_1u_1)=0,
\end{align*}
therefore, $\rho(u_{1,1}^*u_{2,2})$ is also unitary. The same way we see that
$\rho(u_{1,1}^*u_{2,3})$ and $\rho(u_{1,1}^*u_{3,3})$ are self-adjoint
unitaries. Similarly, from 
\begin{align*}
& \gamma(\rho((u_{1,1}^*u_{2,2} + u_{1,1}^*u_{2,3} -
u_{1,1}^*u_{3,3})^*(u_{1,1}^*u_{2,2} + u_{1,1}^*u_{2,3} - u_{1,1}^*u_{3,3})))
\\&\qquad
=\mathrm {tr}\,\pi((u_1+u_2+u_3)^*(u_1+u_2+u_3))=0
\end{align*}
we conclude that $\rho(u_{1,1}^*u_{2,2} + u_{1,1}^*u_{2,3} -
u_{1,1}^*u_{3,3})=0$.

This way, the mapping $u_1\mapsto \rho(u_{1,1}^*u_{2,2}), u_2\mapsto
\rho(u_{1,1}^*u_{2,3}), u_3\mapsto -\rho( u_{1,1}^*u_{3,3})$  gives rise to a non-trivial
$*$-homomorphism of $C^*(\mathbb F(3,2))/\mathcal J \to \mathcal A$, which is
impossible as  $C^*(\mathbb F(3,2))/\mathcal J \simeq M_2(\mathbb C)$, and
$\mathcal A$ is commutative.

For the graph  $G=K_1\cup K_1\cup K_1$, the set of orthogonality conditions
\eqref{eq:orth} is empty, therefore, any biunitary $3\times3$ matrix determines a
representation of $C(\mathrm{Qut}(\mathcal U_G))$.
Then the arguments used above for the graph
${\bullet}-{\bullet}-{\bullet}$ give an
example of nonlocal symmetry.

Assume now  $n:=|V(G)|\geq 4$.
Let $v_i$, $i \in \mathbb N$, be unitary operators on a Hilbert space $H$. Then
$U=\text{diag}(v_1,\ldots, v_n)\in M_n(B(H))$ is a bi-unitary operator which
generates a representation of $C(\mathrm{Qut}(\mathcal U_{G}))$ by letting
\begin{equation}\label{eq:diag-rep}
\rho(u_{a,x}^*u_{a',x'})=\delta_{a,x}\delta_{a',x'}v_a^*v_{a'}.
\end{equation}
Indeed,
\[
 \rho(u_{a,x}^*u_{a',x'}u_{b,y}^*u_{b',y'})=
\delta_{a,x}\delta_{a',x'}v_a^*v_{a'}
\delta_{b,y}\delta_{b',y'}v_b^*v_{b'}
\]
is zero for $a'\sim b,x'\not\sim y$ or  $a'\not\sim b,x'\sim y$ since for
nonzero cases we have $a'=x', b=y$.
In particular,
we have $\rho(u_{a,x}^*u_{a,x})=\delta_{a,x}I$.

In \eqref{eq:diag-rep}, take $H=\mathbb C^2$ and $\pi$ as defined in \eqref{eq:mercedes-benz} and let
\[
 v_2 = v_1 \pi(u_1), \quad v_3= v_2 \pi(u_2) = v_1 \pi(u_1u_2), \quad v_4 =
v_3 \pi(u_3) = v_1 \pi(u_1u_2u_3),
\]
so that $v_1^*v_2=\pi(u_1)$, $v_2^*v_3=\pi(u_2)$, $v_3^*v_4=\pi(u_3)$. 
If we assume that the corresponding winning strategy for $C(\mathrm{Qut}(\mathcal U_{G}))$ is local, then the arguments similar to the above show that the mapping $s\colon u_a\mapsto \pi'(u_{a,a}^*u_{a+1,a+1})$, $a=1,2,3$,  gives rise to a
$*$-homomorphism  of $C^*(\mathbb F(3,2))/\mathcal J$
into a commutative $\mathcal A$ which leads to a contradiction.
\end{proof}


\begin{thebibliography}{MPTW23}

\bibitem[AMR{\etalchar{+}}19]{amrssv}
A.~Atserias, L.~Man\v{c}inska, D.E. Roberson, R.~S\'{a}mal, S.~Severini, and
  A.~Varvitsiotis, \emph{Quantum and non-signalling graph isomorphisms},
  Journal of Combinatorial Theory, Series B \textbf{136} (2019), 289--328.

\bibitem[Ban97]{banica1}
T.~Banica, \emph{Le groupe quantique compact libre ${U}(n)$}, Comm. Math. Phys.
  \textbf{190} (1997), no.~1, 143--172.

\bibitem[Ban99]{banica2}
T.~Banica, \emph{Symmetries of a generic coaction}, Math. Ann. \textbf{314}
  (1999), no.~4, 763--780.

\bibitem[Ban05]{banica}
T.~Banica, \emph{{Quantum automorphism groups of homogeneous graphs}}, Journal
of
  Functional Analysis \textbf{224} (2005), 243--280.

\bibitem[BBC07]{bbc}
Teodor Banica, Julien Bichon, and Beno\^it Collins, \emph{Quantum permutation
  groups: a survey}, Noncommutative harmonic analysis with applications to
  probability, Banach Center Publ., vol.~78, Polish Acad. Sci. Inst. Math.,
  Warsaw, 2007, pp.~13--34. \MR{2402345}

\bibitem[BHTT24]{bhtt}
M.~Brannan, S.J. Harris, I.G. Todorov, and L.~Turowska, \emph{{Quantum
  no-signalling bicorrelations}}, Adv. Math. \textbf{449} (2024), Paper No.
  109732, 81 pp.

\bibitem[DW16]{duan-winter}
R.~Duan and A.~Winter, \emph{No-signalling assisted zero-error capacity of
  quantum channels and an information theoretic interpretation of the
  {L}ov\'{a}sz number}, IEEE Trans. Inf. Theory \textbf{62} (2016), no.~2,
  891--914.

\bibitem[Ful06]{mfthesis}
M.~B. Fulton, \emph{{The quantum automorphism group and undirected trees}},
  Ph.D. thesis, Virginia Polytechnic Institute and State University, July 2006,
  pp.~1--74.

\bibitem[LMR20]{lmr}
M.~Lupini, L.~Man\v{c}inska, and D.E. Roberson, \emph{Nonlocal games and
  quantum permutation groups}, Journal of Functional Analysis \textbf{279}
  (2020), no.~5, 108592.

\bibitem[MPTW23]{MPTW}
L.~Man\v{c}inska, V.I. Paulsen, I.G. Todorov, and A.~Winter, \emph{Products of
  synchronous games}, Studia Math. \textbf{272} (2023), no.~3, 299--317.

\bibitem[PR21]{paulsen-rahaman}
V.I. Paulsen and M.~Rahaman, \emph{Bisynchronous games and factorizable maps},
  Ann. Henri Poincar\'{e} \textbf{22} (2021), no.~2, 593--614.

\bibitem[PSS{\etalchar{+}}16]{psstw}
Vern~I. Paulsen, Simone Severini, Daniel Stahlke, Ivan~G. Todorov, and Andreas
  Winter, \emph{Estimating quantum chromatic numbers}, J. Funct. Anal.
  \textbf{270} (2016), no.~6, 2188--2222. \MR{3460238}

\bibitem[RS21]{RobSch21}
David~E. Roberson and Simon Schmidt, \emph{{Quantum symmetry vs nonlocal
  symmetry}}, arXiv:2012.13328v2 [math.QA], 2021.

\bibitem[TT24]{tt-QNS}
I.G. Todorov and L.~Turowska, \emph{Quantum no-signalling correlations and
  non-local games}, Comm. Math. Phys. \textbf{405} (2024), no.~6, Paper No.
  141, 65 pp.

\bibitem[Voi17]{voigt}
C.~Voigt, \emph{{On the structure of quantum automorphism groups}}, J. Reine
  Angew. Math. \textbf{732} (2017), 255–273.

\bibitem[Wan98]{wang}
S.~Wang, \emph{Quantum symmetry groups of finite spaces}, Comm Math Phys
  \textbf{195} (1998), 195–211.

\bibitem[Web23]{moritz}
Moritz Weber, \emph{Quantum permutation matrices}, Complex Anal. Oper. Theory
  \textbf{17} (2023), no.~3, Paper No. 37, 26. \MR{4564553}

\end{thebibliography}
\newcommand{\etalchar}[1]{$^{#1}$}

\end{document}